\documentclass[10pt,reqno]{amsart}

\usepackage{amsmath,amssymb,amsthm,amsfonts,latexsym,epsfig,enumerate}

\newtheorem{theorem}{Theorem}
\newtheorem{lemma}[theorem]{Lemma}

\newtheorem{proposition}[theorem]{Proposition}
\newtheorem{corollary}[theorem]{Corollary}

\numberwithin{theorem}{section}

\theoremstyle{definition}
\newtheorem*{rmk}{Remark}

\def\f{\frac}
\def\ft#1#2{(#1)/#2}
\def\fb#1#2{#1/(#2)}



\begin{document}

\title{A Sum Involving the Greatest-Integer Function}

\author[David Ross Richman]{David Ross Richman$^1$}
\thanks{$^1$David Richman died on February 1, 1991, in an airplane accident.  
This paper was in his files and has been slightly modified for publication 
by Michael Filaseta, Jeffrey Lagarias, and Harry Richman   
with the consent of David's wife Shumei Cheng Richman.}

\address{Department of Mathematics, University of South Carolina, Columbia, SC 29208, USA}

\subjclass{Primary 26D15; Secondary 40A25, 11A99}

\maketitle

\begin{abstract}
We determine properties of
the set of values of 
$ [nx] - ([x]/1 + [2x]/2 + \cdots + [nx]/n)$
as $n$ and $x$ vary.
\end{abstract}

\section{Introduction}
\label{sec:intro}
Let $x$ denote a real number and let $n$ denote a positive integer.  Problem
5 of the 1981 U.S.A. Mathematical Olympiad was to prove that
\begin{equation}
\label{eq:olympiad}
[nx] \geq \f {[x]}{1} + \frac{[2x]}{2} + \frac{[3x]}{3} +
\cdots + \frac{[nx]}{n}
\end{equation}
where $[t]$ denotes the greatest integer less than or equal to $t$.  Observe
that 
\begin{equation}
\label{eq:obs}
nx = \frac{x}{1} + \frac{2x}{2} + \frac{3x}{3} + \cdots +
\f{nx}{n} \geq \f{[x]}{1} + \f{[2x]}{2} + \f{[3x]}{3} + \cdots +
\f{[nx]}{n}.
\end{equation}
This relation does not, however, obviously imply \eqref{eq:olympiad}, 
because the sum on the
right-hand side of \eqref{eq:obs} is not necessarily an integer.  
Proofs of \eqref{eq:olympiad} are given by
Klamkin \cite[pp. 92--92]{kl}
and Larsen \cite[p. 279]{l}.

More can be said about relation \eqref{eq:olympiad}; for example,
if equality does not hold 
($[nx] \neq 
[x]/1 + [2x]/2 + \cdots + [nx]/n$),
then in fact
\begin{equation*}
[nx] \geq \frac16 +  \f {[x]}{1} + \frac{[2x]}{2} + \frac{[3x]}{3} +
\cdots + \frac{[nx]}{n}.
\end{equation*}
(This is the content of Proposition~\ref{prop:21}.)

Let 
$f_n(x) 
:= [nx] - ([x]/1 + [2x]/2 + \cdots + [nx]/n)$. 
Let $S_n$ denote the
range of this function; it is a finite set of  rational numbers.
Let $S = \bigcup_{n=1}^{\infty} S_n$; it is a countable set of rational numbers
and relation \eqref{eq:olympiad} is equivalent to the
statement that the elements of $S$ are all nonnegative.  
The main result of this paper is:\medskip

\begin{theorem}
\begin{enumerate}[(i)]
\item The smallest limit point of the set $S$ is
\[
\lambda = \sum^\infty_{k=1} \f{1}{2k(2k+1)} = 1- \log 2 \approx 0.30685 .
\]

\item The members of $S$ smaller than $\lambda$ are given by  
$0$, $\frac{4}{15}$,
and all the partial sums $t_m = \sum_{k=1}^m \frac{1}{2k(2k+1)}$ for $m \ge1.$

\item The members of $S$ larger than  $\lambda$ are dense in the interval
$[ \lambda , +\infty).$
\end{enumerate}
\end{theorem}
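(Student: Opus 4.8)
The plan is to reduce $f_n$ to a workable form and read off its structure. Writing $[t]=t-\{t\}$ and cancelling the common $nx$ shows
\[
f_n(x)=\sum_{k=1}^n \frac{\{kx\}}{k}-\{nx\},
\]
where $\{t\}$ denotes the fractional part. Hence $f_n$ has period $1$ in $x$ and is piecewise constant on $[0,1)$, with jumps only at the reduced fractions $j/k$ with $k\le n$. A direct computation shows that, as $x$ increases through such a point, $f_n$ jumps by $-\frac1k H_{[n/k]}$ when $k\nmid n$ and by $1-\frac1k H_{[n/k]}$ when $k\mid n$, where $H_r=1+\frac12+\cdots+\frac1r$; since the jump depends only on $k$ and $f_n\equiv 0$ on $(0,1/n)$, the value on any interval is a finite sum of such jumps. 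I will also record the reflection $f_n(x)+f_n(1-x)=H_n-1$, which follows from $\{k(1-x)\}=1-\{kx\}$ and shows $S_n$ is symmetric about $(H_n-1)/2$, tying the smallest values of $f_n$ to its largest.

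For parts (i) and (ii) I would first produce the claimed small values explicitly. Taking $n=2m+1$ and $x\in[\frac1{m+1},\frac{2}{2m+1})$ forces $[kx]=0$ for $k\le m$ and $[kx]=1$ for $m<k\le 2m+1$, whence $f_n(x)=1-(H_{2m+1}-H_m)$, and the classical identity $H_{2m+1}-H_m=\sum_{k=1}^{2m}\frac{(-1)^{k+1}}{k}+\frac1{2m+1}$ rewrites this as $t_m=\sum_{k=1}^m(\frac1{2k}-\frac1{2k+1})$, with $t_m\uparrow 1-\log 2=\lambda$. The value $0$ comes from $(0,1/n)$, while evaluating $f_{2m+1}$ just to the right of $\frac12$ gives a second family $\frac12\sum_{i=1}^m\frac1{2i+1}$, whose only members below $\lambda$ are $t_1=\frac16$ and the exceptional $\frac4{15}$ (the case $m=2$). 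Thus every listed number lies in $S$ and below $\lambda$.

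The crux, which I expect to be the main obstacle, is the matching lower bound: for every $n$ and every $x$, if $f_n(x)<\lambda$ then $f_n(x)$ equals $0$, $\frac4{15}$, or some $t_m$. I would approach this by fixing the Farey interval containing $x$ and bounding the accumulated sum of jumps from below, showing that any interval not of the two special shapes above already forces $f_n(x)\ge\lambda$. The difficulty is that this is a genuinely uniform statement over all $n$ and all $O(n^2)$ intervals, so it demands a structural inequality rather than finite case-checking; I expect it to hinge on showing that a value below $\lambda$ requires $x$ to sit just above a unit fraction $1/q$ and on controlling the jumps accumulated up to that point. Granting this, part (i) is immediate: $t_m\uparrow\lambda$ makes $\lambda$ a limit point, while the members of $S$ below $\lambda$ form the set $\{0,\frac4{15}\}\cup\{t_m:m\ge1\}$, whose only accumulation point is $\lambda$, so no limit point is smaller.

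For the density statement (iii) I would use a two-parameter argument, since no single family suffices: the values of any one increasing family have fixed (though shrinking) gaps near each fixed level. Instead, fixing an irrational $x$, the identity gives $f_n(x)=\sum_{k\le n}\frac{\{kx\}}{k}-\{nx\}$, where the partial sums increase to $\infty$ in steps $\{nx\}/n=O(1/n)$ while $\{nx\}$ is equidistributed in $[0,1)$; placing the partial sum near a target and letting $\{nx\}$ sweep $[0,1)$ produces values dense in a neighborhood of any sufficiently large $v$. The remaining task is to push the density down to $\lambda$ itself, which I would do by splicing these families with the near-$\lambda$ configurations $t_m$ and their perturbations by additional jumps, and by transferring the dense small-side values to the large side via the reflection identity. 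Keeping the approximation uniform so that the union is dense on all of $[\lambda,+\infty)$ is the secondary technical point here.
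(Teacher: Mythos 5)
Your setup is sound and your explicit computations check out: the rewriting $f_n(x)=\sum_{k\le n}\{kx\}/k-\{nx\}$, the jump formula at reduced fractions $j/k$, the family $f_{2m+1}(x)=t_m$ for $x\in[\frac{1}{m+1},\frac{2}{2m+1})$, and the family $f_{2m+1}(1/2)=\frac12\sum_{i=1}^m\frac{1}{2i+1}$ (whose only members below $\lambda$ are $\frac16$ and $\frac{4}{15}$) all agree with the paper. But the theorem is not proved, because the statement you yourself label ``the crux'' --- that $f_n(x)<\lambda$ forces $f_n(x)\in\{0,\frac{4}{15}\}\cup\{t_m: m\ge 1\}$ --- is exactly where all the work lies, and you assume it (``Granting this, part (i) is immediate''). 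Your proposed route, bounding accumulated jumps over Farey intervals, is only named, not executed; you even note that it ``demands a structural inequality rather than finite case-checking'' without supplying one. The paper's proof of this step occupies Sections 2--4: it introduces $x_n=\max_{k\le n}[kx]/k$ and its reduced denominator $d=d_{n,x}$, proves that $d(kx_n-[kx_n])$ runs over all residues modulo $d$ as $k$ runs over $d$ consecutive integers, and combines this with the rearrangement inequality and a case analysis on $[dx]-d[x]$ and on $n$ versus $2d-1$ to show that every configuration other than the two you identified yields a value exceeding $\lambda$ (with explicit thresholds such as $\frac{43}{120}$ and $\frac{71}{210}$). None of that is present or replaced in your write-up, so parts (i) and (ii) rest on an unproved claim.

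Part (iii) has the same character. The equidistribution idea for irrational $x$ is plausible for large targets, but you concede that pushing the density down to $\lambda$ and keeping the approximation uniform are unresolved, and the proposed fixes (splicing with the $t_m$ configurations, transferring via the reflection $f_n(x)+f_n(1-x)=H_n-1$) are not worked out; note in particular that the reflection sends values near $\min S_n$ to values near $H_n-1\to\infty$, so it does not obviously produce values near $\lambda$. The paper instead takes $x=1/t$ for integer $t\ge 2$: it shows $f_{mt+t-1}(1/t)\ge\frac16\sum_{j\le m}1/j\to\infty$, that $f_{(m+1)t+t-1}(1/t)<f_{mt}(1/t)$, and that consecutive values of $f_n(1/t)$ within a block $mt\le n\le mt+t-1$ differ by less than $1/t$, which traps any $u\ge\lambda$ within $1/t$ of some element of $S$. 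You would need either to carry out that argument or to complete your own; as written, the proposal is a credible plan with correct supporting computations, but the two central steps are gaps.
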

The theorem can be summarized in the equivalent form
\begin{align}
\label{eq:summary}
S = \ &\left(\text{the set of partial sums of}\quad \sum^{\infty}_{k=1} 
\f{1}{2k(2k+1)}\right) \ \bigcup \ \bigg\{ 0, \f{4}{15} \bigg\} \nonumber \\ 
&\bigcup \ \left(\text{a dense subset of the interval}
\quad\bigg(\sum^\infty_{k=1} \f{1}{2k(2k+1)}, \infty\bigg)\right).
\end{align}
In particular, \eqref{eq:summary}  implies that 
$[nx] - ([x]/1 + [2x]/2 + \cdots + [nx]/n)$ 
equals $0$ or $\f{1}{6}$ or
$\f{1}{6} + \f{1}{20}$ or $\f{1}{6} + \f{1}{20} + \f{1}{42}$ 
or a number which is greater than or equal to 
$\f{1}{6} + \f{1}{20} + \f{1}{42} +\f{1}{72}$.

\begin{rmk}
Note that $[nx] - ([x]/1 + [2x]/2 + \cdots + [nx]/n) = 0$ 
when $x$ is an integer (or, more generally, when $x - [x] < 1/n$).  
Let $m$ denote a positive integer; 
one can easily
prove by induction on $m$ that if $n = 2m+1$ and $x = 1/(m+1)$, 
then $[nx] - ([x]/1 + [2x]/2 + \cdots + [nx]/n) = \sum^m_{k=1}
1/(2k(2k+1))$.  
If $n = 5$ and $x = 1/2$, then 
$[nx] - ([x]/1 + [2x]/2 + \cdots + [nx]/n) = 4/15$.  
These observations already imply that $S$ contains 
$0, \f{4}{15}$ and all the partial sums of 
$\sum^\infty_{k=1} 1/(2k(2k+1))$.   
\end{rmk}

The author was able to discover \eqref{eq:summary} largely because, 
when $n$ is fixed and $x$ varies, the values of 
$[nx] - ([x]/1 + [2x]/2 + \cdots + [nx]/n)$ 
can be calculated explicitily 
(a similar observation is made in \cite[p. 92]{kl}).  
For example, considering the case that $n = 3$, one has 
\[
[3x] - \f{[x]}{1} - \f{[2x]}{2} - \f{[3x]}{3} = 
\begin{cases}
 0 &\quad\text{when}\quad x - [x] < \f{1}{3}, \\[2pt]
\f{2}{3} &\quad \text{when}\quad \f{1}{3} \leq x-[x] < \f{1}{2},\\[2pt]
\f{1}{6} &\quad \text{when}\quad \f{1}{2} \leq x - [x] < \f{2}{3},\\[2pt]
\f{5}{6} &\quad \text{when}\quad \f{2}{3} \leq x - [x].
\end{cases}
\]
Thus $S_3= \{ 0, \frac{1}{6}, \frac{2}{3}, \frac{5}{6} \}$.
Calculations of this kind are useful for suggesting patterns and 
conjectures, but are not needed to prove \eqref{eq:olympiad} or \eqref{eq:summary}.

To prove \eqref{eq:summary}, 
this paper will focus attention on the smallest number $y$
satisfying $[ky] = [kx]$ for every $k$ in $\{1, \dots ,n\}$, 
when $n$ and $x$ are fixed.
This approach, or a similar idea, is also used in 
\cite[p. 92]{kl} and \cite[p. 279]{l}.

This paper is organized as follows. 
Section~\ref{sec:lower-bound} contains 
a new, simple proof of \eqref{eq:olympiad}.  
In Sections~\ref{sec:prelim} and \ref{sec:limit-point}
we obtain necessary and sufficient conditions for 
$[nx] - ([x]/1 + [2x]/2 + \cdots + [nx]/n)$ 
to be less than $\lambda = \sum^\infty_{k=1} 1/(2k(2k+1))$.  
We will then establish the main result 
\eqref{eq:summary}
in Section~\ref{sec:main-proof}.  
Finally, Section~\ref{sec:upper-bound}
contains a proof that 
$[nx] - ([x]/1 + [2x]/2 + \cdots + [nx]/n) \leq \f{1}{2} + \f{1}{3} + \cdots + \f{1}{n}$. 
This gives an upper bound for $S_n$ which complements
the lower bound for $S_n$ implied by \eqref{eq:olympiad}.

\section{A lower bound for $S_n$}
\label{sec:lower-bound}
We begin by sketching a new proof of the Olympiad problem \eqref{eq:olympiad},
which we restate below.
Recall that $S_n$ denotes the set of numbers of the form
$ [nx] - \sum^{n}_{k=1} {[kx]}/{k}$
where $x$ varies over all real numbers.
\begin{lemma}[1981 USAMO, Problem 5]
For any positive integer $n$ and any $x$, 
\begin{equation*}
[nx] - \sum_{k=1}^n \frac{[kx]}{k} \geq 0 .
\end{equation*}
\end{lemma}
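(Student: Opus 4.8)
The plan is to first rewrite $f_n$ in terms of fractional parts, which makes all the relevant quantities nonnegative and treats every real $x$ uniformly. Writing $\{t\} = t - [t]$ and substituting $[kx] = kx - \{kx\}$ throughout, a short computation (the $nx$ contributions cancel) gives
\[
f_n(x) = \sum_{k=1}^n \frac{\{kx\}}{k} - \{nx\}.
\]
Thus the lemma is equivalent to the assertion that $\sum_{k=1}^n \{kx\}/k \ge \{nx\}$. Setting $c_k := \{kx\}$, each $c_k \ge 0$, and the elementary identity $\{a+b\} \le \{a\} + \{b\}$ gives the subadditivity relation $c_{i+j} \le c_i + c_j$ (equivalently, $[ix] + [jx] \le [(i+j)x]$), valid for all real $x$. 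So it suffices to prove the purely combinatorial statement: if $c_0 = 0$ and $c_1, c_2, \dots$ are nonnegative and subadditive, then $\sum_{k=1}^n c_k/k \ge c_n$.

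Next I would argue this by strong induction on $n$, the case $n = 1$ being trivial. The two ingredients are: (A) pairing the subadditivity relations $c_k + c_{n-k} \ge c_n$ for $k = 1, \dots, n-1$ and summing, which yields $2\sum_{k=1}^{n-1} c_k \ge (n-1)c_n$; and (B) summing the induction hypotheses $c_m \le \sum_{k=1}^m c_k/k$ over $m = 1, \dots, n-1$ and interchanging the order of summation, which gives $\sum_{m=1}^{n-1} c_m \le \sum_{k=1}^{n-1} (n-k)c_k/k$, that is, $2\sum_{k=1}^{n-1} c_k \le n\sum_{k=1}^{n-1} c_k/k$. Chaining (A) and (B) produces $(n-1)c_n \le n\sum_{k=1}^{n-1} c_k/k$; dividing by $n$ and adding the missing term $c_n/n$ to both sides then gives $\sum_{k=1}^n c_k/k \ge c_n$, which closes the induction.

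The main obstacle is that the naive induction fails: knowing $c_{n-1} \le \sum_{k=1}^{n-1} c_k/k$ alone does not control $c_n$, since the weight $1/n$ on the new term is too small and there is no term-by-term comparison between $c_{n-1}$ and $c_n$. The resolution is to use all of the previous cases simultaneously (step (B)) together with the crude pairwise bound (step (A)); it is precisely the combination of the harmonic averaging coming from the summed hypotheses with the pairing bound that matches the factor $(n-1)/n$ needed to absorb the final term. I expect the fractional-part reformulation to be the natural starting point for the sharper quantitative results later in the paper as well, so it is worth recording the identity for $f_n(x)$ at this stage.
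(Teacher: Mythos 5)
Your proof is correct, and it takes a genuinely different route from the paper. You reduce the inequality to a purely combinatorial statement about nonnegative subadditive sequences: writing $c_k = kx - [kx]$, the identity $f_n(x) = \sum_{k=1}^n c_k/k - c_n$ and the subadditivity $c_{i+j} \le c_i + c_j$ are both immediate, and your two summations --- the pairing bound $2\sum_{k=1}^{n-1} c_k \ge (n-1)c_n$ and the averaged induction hypotheses $2\sum_{k=1}^{n-1} c_k \le n\sum_{k=1}^{n-1} c_k/k$ --- chain together exactly as you say to close the strong induction. (I checked the interchange of summation in step (B) and the final division by $n$; both are fine.) The paper instead replaces $x$ by the rational number $x_n = \max\{[kx]/k : k \le n\}$, which satisfies $[kx_n]=[kx]$ for $k\le n$, and inducts by splitting $n = r + (\text{multiple of } d_{n,x})$ so that the bulk of $[nx_n]$ is an exact integer; only the small remainder $r < d_{n,x}$ needs the induction hypothesis. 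Your argument is more elementary and self-contained, and it isolates the one property of the floor function actually needed (subadditivity of fractional parts), so it would generalize to any subadditive sequence. What the paper's approach buys is the auxiliary structure $x_n$, $d_{n,x}$, and the exact distribution statement \eqref{eq:9} for the fractional parts $kx_n - [kx_n]$; these are reused repeatedly in Sections~\ref{sec:prelim}--\ref{sec:upper-bound} to get the quantitative refinements (the gap $\tfrac16$, the limit point $1-\log 2$, the sharp upper bound), none of which follow from subadditivity alone. So your reformulation is a clean alternative for the Olympiad inequality itself, but it would not substitute for the paper's setup in the later sections.
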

\begin{proof}
Let $n$ be a fixed positive integer.
Define 
\[x_n = \max\{ [kx]/k : k = 1,2,\dots ,n\} .\]
Note that $x \geq  x_n$ and $x_n \geq [kx]/k$
for every $k \in \{1,\dots,n\}$.  
Therefore
\begin{equation}
\label{eq:x-max}
[kx_n] = [kx] \quad \text{for every} \quad k \in 
 \{1,\dots,n\}.
\end{equation}
Let $d =d_{n,x}$
denote the smallest element of $\{1,2,\dots,n\}$ such that $[dx]/d = x_n$. 
If $y < x_n$, {then} 
${dy < dx_n = [dx]}, $ {so} $ [dy] < [dx]$. 
Thus $x_n$ is the smallest real number satisfying~\eqref{eq:x-max}.

The relation $[nx] \geq \sum^n_{k=1} [kx]/k$ will now
be proved by induction on $n$; 
it obviously holds for all $x$ when $n = 1$. 
Suppose now that $n > 1$ and let $r$ denote the element of
$\{0,1,\dots,d-1\}$ which is congruent to $n$ modulo $d$.  
Observe that $(n-r)x_n$ is an integer, because $n-r$ is a multiple of $d$
and $x_n = [dx]/d$.  
Therefore 
\begin{align}
\label{eq:5}
[r x_n + (n-r) x_n] &= [rx_n] + (n-r)x_n \nonumber \\
&= [rx_n] - \sum^{r}_{k=1} \f{[kx_n]}{k}
+ \sum^{n}_{k=r+1} \f{kx_n-[kx_n]}{k} 
+ \sum^{n}_{k=1} \f{[kx_n]}{k} .
\end{align}
By the induction hypothesis and \eqref{eq:x-max}
we have
$[rx_n] \geq \sum^{r}_{k=1} [kx_{n}]/k$.
Hence \eqref{eq:5}
implies that
\begin{equation}
\label{eq:6}
[nx_n] \geq  
\sum^{n}_{k=r+1} \f{kx_n-[kx_n]}{k} 
+ \sum^{n}_{k=1} \f{[kx_n]}{k} 
\geq \sum^{n}_{k=1} \f{[kx_n]}{k}.
\end{equation}
This relation and \eqref{eq:x-max} imply $[nx] \geq \sum^n_{k=1} [kx]/k$. 
\end{proof}

\section{Preliminary analysis}
\label{sec:prelim}
We now turn toward establishing the main result \eqref{eq:summary}.  
The next result is a partial result in this direction 
and will provide us with some of the background for establishing \eqref{eq:summary}.  
We make use of the same notation as in the proof above, namely
\begin{align*}
x_n &=  \max\{ [kx]/k : k = 1,2,\dots ,n\}, \\
d_{n,x} &= \text{smallest positive integer $d$ such that }[dx]/d = x_n.
\end{align*}
It is clear that $d_{n,x} \leq n$.
It is  shown in the proof below that
 $d_{n,x}$ is the denominator of $x_n$ in lowest terms.

\begin{proposition} 
\label{prop:21}
If $d_{n,x}=1$, then 
$[nx] = \sum^{n}_{k=1} [kx]/k$; 
otherwise, $[nx] \geq
\f{1}{6} + \sum^{n}_{k=1} [kx]/k$.
\end{proposition}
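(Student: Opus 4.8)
The plan is to read the required decomposition directly off equation~\eqref{eq:5} and then estimate the fractional‑part sum it produces. First I would dispose of the case $d_{n,x}=1$: here the defining property gives $x_n=[dx]/d=[x]$, an integer, so $[kx_n]=kx_n$ for every $k$, and both $[nx_n]$ and $\sum_{k=1}^n[kx_n]/k$ collapse to $nx_n$; since $[kx]=[kx_n]$ for $k\le n$ by~\eqref{eq:x-max}, this yields $[nx]=\sum_{k=1}^n[kx]/k$. For the main case $d:=d_{n,x}\ge2$ I would first record that $d$ is the reduced denominator of $x_n$: writing $x_n=[dx]/d$ and letting $b\mid d$ be the denominator in lowest terms, $bx_n\in\mathbb{Z}$ forces $[bx]/b=[bx_n]/b=x_n$, so minimality of $d$ gives $b=d$ and hence $\gcd(dx_n,d)=1$.

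The engine is the rearranged form of~\eqref{eq:5}. With $r\in\{0,\dots,d-1\}$ chosen so that $r\equiv n\pmod d$, and using $[kx]=[kx_n]$, equation~\eqref{eq:5} is exactly
\[
f_n(x)\;=\;\Big([rx_n]-\sum_{k=1}^{r}\frac{[kx_n]}{k}\Big)\;+\;\sum_{k=r+1}^{n}\frac{kx_n-[kx_n]}{k}.
\]
The first bracket is $f_r(x_n)\ge0$ by the Lemma (the empty case $r=0$ gives $0$), so it suffices to bound the second sum below by $\tfrac16$. Since $d\mid(n-r)$ and $n\ge d$, we have $n\ge r+d$, so the sum contains the block $k\in\{r+1,\dots,r+d\}$; discarding the remaining nonnegative terms leaves $\sum_{k=r+1}^{r+d}\{kx_n\}/k$. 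Because $\gcd(dx_n,d)=1$, the values $\{kx_n\}$ for these $d$ consecutive $k$ form a permutation of $0,\tfrac1d,\dots,\tfrac{d-1}{d}$, so by the rearrangement inequality the sum is smallest when the increasing fractional parts are matched to the increasing denominators:
\[
\sum_{k=r+1}^{r+d}\frac{\{kx_n\}}{k}\;\ge\;\sum_{m=0}^{d-1}\frac{m/d}{r+1+m}\;\ge\;\frac1d\sum_{m=1}^{d-1}\frac{m}{d+m}\;=:\;\phi(d),
\]
where the last step uses $r+1\le d$.

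What remains, and what I expect to be the crux, is the clean inequality $\phi(d)\ge\tfrac16$ for all $d\ge2$. I would avoid crude harmonic estimates here (they degrade as $d\to\infty$) and instead telescope: writing $\frac{m}{d+m}=1-\frac{d}{d+m}$ turns $\phi(d)$ into $1-\frac1d-\sum_{j=d+1}^{2d-1}\frac1j$, and a short computation gives the exact increment
\[
\phi(d+1)-\phi(d)\;=\;\frac{1}{2d(2d+1)}\;>\;0 .
\]
Since $\phi(2)=\tfrac16$, this shows $\phi(d)\ge\tfrac16$ with equality exactly at $d=2$; in fact it gives $\phi(d)=\sum_{k=1}^{d-1}\frac{1}{2k(2k+1)}$, a reassuring cross‑check against the partial sums appearing in the main theorem. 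Combining the three displays yields $f_n(x)\ge\phi(d)\ge\tfrac16$, finishing the case $d\ge2$. The only points needing genuine care are justifying the rearrangement reduction and confirming that the full block $\{r+1,\dots,r+d\}$ is actually available (i.e.\ $n\ge r+d$); everything else is bookkeeping.
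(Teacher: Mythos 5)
Your proof is correct, and it follows the paper's skeleton exactly up to the last step: the same reduction via \eqref{eq:5}/\eqref{eq:7} to the block $k=r+1,\dots,r+d$ (with the same verification that $n\ge r+d$), and the same equidistribution fact \eqref{eq:9} that the fractional parts $kx_n-[kx_n]$ on that block are a permutation of $0,\tfrac1d,\dots,\tfrac{d-1}{d}$. Where you diverge is the final estimate. The paper bounds every weight crudely by $\tfrac1k\ge\tfrac1{2d-1}$ and averages, obtaining $\f{d-1}{2(2d-1)}\ge\f16$ in \eqref{eq:8}--\eqref{eq:10}. You instead invoke the rearrangement inequality (the paper's Lemma~\ref{lem:3}) to pair the sorted fractional parts against the decreasing weights $\tfrac1k$, then use $r+1\le d$, arriving at the sharper lower bound $\phi(d)=1-\sum_{j=d}^{2d-1}\tfrac1j=\sum_{k=1}^{d-1}\f{1}{2k(2k+1)}$, which is increasing in $d$ and equals $\tfrac16$ at $d=2$. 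Your telescoping identity for $\phi$ is exactly the paper's \eqref{eq:21}, so your bound is correct and strictly stronger for $d\ge3$ (it tends to $\lambda=1-\log 2$ rather than $\tfrac14$); in effect you have anticipated the rearrangement argument the paper defers to Proposition~\ref{prop:5} and the partial-sum identity underlying Proposition~\ref{prop:31}(i). The paper's cruder route buys only brevity at this stage, since $\tfrac16$ is all that is needed here; your route buys a preview of why the partial sums $\sum_{k=1}^{m}\f{1}{2k(2k+1)}$ are the extremal values. One cosmetic caveat: the rearrangement lemma is stated in the paper only after this proposition, so if you wanted to splice your argument into the paper verbatim you would need to move that lemma earlier, but this is an ordering issue, not a logical gap.
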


\begin{proof}
Suppose at first that $d = d_{n,x} =1$.  
Then 
$[x] = \max\{[kx]/k : k = 1, 2,\dots,n\}$.  
This observation and the fact that $[kx]/k \geq [x]$ 
for any $x$ and any integer $k \geq 1$
imply that $[kx]/k = [x]$ for every $k \in \{1,\dots,n\}$.  
Hence 
\[
\sum^{n}_{k=1} \f{[kx]}{k} = n[x] = [nx].
\]

Suppose now that $d_{n,x} > 1$, and let $r$ denote (as before) the element of
${ \{0,1,\dots,d-1\} }$ which is congruent to $n$ modulo $d$.  
Statements \eqref{eq:x-max} and \eqref{eq:6} imply that
\begin{equation}
\label{eq:7}
[nx] - \sum^n_{k=1} \f{[kx]}{k} \geq \sum^{n}_{k=r+1} \f{kx_n-[kx_n]}{k}.
\end{equation}
Note that $n \geq r+d$, because $n \geq d > r$ and $n \equiv r \pmod{d}$. 
Therefore 
\begin{align}
\label{eq:8}
\sum^n_{k=r+1} \f{kx_n-[kx_n]}{k} \geq \sum^{r+d}_{k=r+1} \f{kx_n-[kx_n]}{k} 
&\geq \f{1}{r+d} \sum^{r+d}_{k=r+1} (kx_n - [kx_n]) \nonumber \\
&\geq \f{1}{2d-1} \sum^{r+d}_{k=r+1} (kx_n - [kx_n]) .
\end{align}

Observe that, if $k$ is an element of $\{1,\dots,n\}$ such that $kx_n$ is an
integer, then $kx_n = [kx_n] = [kx]$ by \eqref{eq:x-max}, 
so $x_n = [kx]/k$.  
The definition of $d=d_{n,x}$ now implies that $d$ is the smallest positive
integer such that $dx_n$ is an integer.  
Hence $d$ and $dx_n$ are relatively prime.  
Therefore, if $k$ varies over a set of integers which are pairwise
incongruent modulo~$d$, 
then the integers $kdx_n$ will be pairwise incongruent
modulo~$d$, 
and hence the integers $kdx_n - d[kx_n]$ will also be pairwise
incongruent modulo~$d$.  
Since
\[
0 \leq d(kx_n - [kx_n]) < d \quad \text{for any }  k,
\]
we obtain that
\begin{align}
\label{eq:9}
&\text{if $R$ is a set of $d$ integers which are 
      pairwise incongruent modulo $d$,} \nonumber \\
&\text{then } \quad 
 \{kdx_n-d[kx_n]: k \in R\} = \{0,1,\dots,d-1\}. 
\end{align}
A similar observation is made in 
\cite[p. 92]{kl}.  
By \eqref{eq:9},
\begin{equation}
\label{eq:10}
\f{1}{2d-1} \sum^{r+d}_{k=r+1} (kx_n - [kx_n]) =
\f{1}{2d-1} \sum^{d-1}_{k=0} \frac{k}{d}
= \f{d-1}{2(2d-1)}
= \frac{1}{4 + \frac{2}{d-1} }.
\end{equation}
This equation and the supposition that $d=d_{n,x} > 1$ (so $d \ge 2$)
imply that 
\[ \f{1}{2d-1} 
\sum^{r+d}_{k=r+1} {(kx_n-[kx_n])} \geq \f{1}{6}. \]
From \eqref{eq:7} and \eqref{eq:8},
we deduce that $[nx] - \sum^{n}_{k=1} [kx]/k \geq \f{1}{6}$.
\end{proof}

\begin{corollary}
\label{cor:32}
If $x - [x] < \frac{1}{n}$, 
then $[nx] = \sum^{n}_{k=1} [kx]/k$; 
otherwise, 
$[nx] \geq \f{1}{6} + \sum^{n}_{k=1} [kx]/k$.
\end{corollary}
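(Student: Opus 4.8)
The plan is to deduce the corollary directly from Proposition~\ref{prop:21} by proving that the hypothesis $d_{n,x}=1$ is equivalent to the explicit inequality $x-[x]<1/n$. Once this equivalence is in hand, the two cases of the corollary follow verbatim from the two cases of the proposition, so essentially all the work lies in translating the condition on $d_{n,x}$ into a condition on the fractional part of $x$.

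First I would recall from the proof of Proposition~\ref{prop:21} that $d_{n,x}=1$ holds exactly when $x_n=[x]$. Since $[kx]/k \ge [x]$ for every integer $k\ge 1$, the maximum $x_n=\max\{[kx]/k : k=1,\dots,n\}$ equals $[x]$ if and only if $[kx]/k=[x]$, that is $[kx]=k[x]$, for every $k\in\{1,\dots,n\}$. Thus the task reduces to deciding for which $x$ all of the equalities $[kx]=k[x]$, $1\le k\le n$, hold simultaneously.

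Next I would unwind these equalities in terms of $x-[x]$. Writing $x=[x]+(x-[x])$ gives $[kx]=k[x]+[\,k(x-[x])\,]$, so $[kx]=k[x]$ precisely when $[\,k(x-[x])\,]=0$, i.e. when $k(x-[x])<1$, i.e. when $x-[x]<1/k$. Imposing this for every $k\in\{1,\dots,n\}$ is the same as imposing the single strongest constraint, which comes from $k=n$ because $1/n\le 1/k$ for $k\le n$: if $x-[x]<1/n$ then $x-[x]<1/k$ for all such $k$, and conversely. Combining this with the previous paragraph yields $d_{n,x}=1 \iff x-[x]<1/n$, and the corollary then follows by invoking Proposition~\ref{prop:21}.

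I do not expect any genuine obstacle here, since the argument is a routine unfolding of the definitions underlying $x_n$ and $d_{n,x}$. The only point that warrants a moment's care is the reduction of the full system of inequalities $x-[x]<1/k$ $(1\le k\le n)$ to the single inequality $x-[x]<1/n$; verifying that the $k=n$ constraint is the binding one is what makes the clean threshold $1/n$ appear.
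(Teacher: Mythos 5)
Your proposal is correct and follows essentially the same route as the paper: both reduce the corollary to the equivalence $d_{n,x}=1 \iff x-[x]<1/n$ and then invoke Proposition~\ref{prop:21}. The paper verifies the two implications by direct inequality manipulations (e.g.\ $x-[x]\ge 1/n$ forces $[nx]/n>[x]$), while you use the identity $[kx]=k[x]+[\,k(x-[x])\,]$ and observe that $k=n$ gives the binding constraint; these are minor variations of the same argument.
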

\begin{proof}
By Proposition~\ref{prop:21}
it suffices to show that
\[ x - [x] < \f{1}{n} \qquad\Leftrightarrow\qquad d_{n,x} = 1. \]

Suppose at first that $x - [x] < 1/n$.  Then $kx <
k[x] + k/n \leq k[x] + 1$ for every $k \in 
\{1,\dots,n\}$, and hence $[kx] \leq k[x]$ for every $k 
\in \{1,\dots, n\}$.  Therefore $[x] = \max\{[kx]/k: k =
1,\dots,n\}$ and hence $d_{n,x} = 1$.

Suppose now that $x - [x] \geq {1}/{n}$.  Then $nx \geq n[x] + 1$, so $[nx]
\geq n[x] + 1$.  Hence $[nx]/n > [x]$, so $[x] \neq \max\{[kx]/k: k =
1,\dots,n\}$.  Therefore $d_{n,x} > 1$.
\end{proof}

Note that $[nx] = \f{1}{6} + \sum^{n}_{k=1} [kx]/k$ when
$n = 3 $ and $x = \f{1}{2}$.

\begin{lemma}[Rearrangement inequality]  
\label{lem:3}
Let $b_1,\dots, b_m$ and $c_1,\dots,c_m$ denote real
numbers such that $c_1 > c_2 > \cdots > c_m$.  
Let $\tau$ denote a permutation
of $\{1,\ldots,m\}$ such that 
$b_{\tau(1)} \leq b_{\tau(2)} \leq \cdots \leq
b_{\tau(m)}$.  
Then 
\[
\sum^{m}_{i=1} b_{\tau(i)}c_i \leq \sum^{m}_{i=1} b_i c_i \leq
\sum^{m}_{i=1} b_{\tau(m+1-i)}c_i.
\]
\end{lemma}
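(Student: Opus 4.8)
The plan is to prove the two inequalities by the standard exchange (or ``adjacent transposition'') argument, treating them symmetrically. I would focus on the right-hand inequality, which asserts that among all arrangements $\sum_{i=1}^m b_{\pi(i)} c_i$, as $\pi$ ranges over permutations of $\{1,\dots,m\}$, the largest value is attained when the $b_{\pi(i)}$ are arranged in \emph{decreasing} order, i.e.\ paired with the decreasing sequence $c_1 > \cdots > c_m$ in the same order. Since $\tau$ sorts the $b$'s increasingly, the permutation $i \mapsto \tau(m+1-i)$ sorts them decreasingly, so this maximal arrangement is exactly $\sum_{i=1}^m b_{\tau(m+1-i)} c_i$, which gives the desired bound on the arbitrary arrangement $\sum_i b_i c_i$.

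The engine of the argument is a single two-term swap. Suppose a permutation $\pi$ has an \emph{inversion}: indices $i<j$ with $b_{\pi(i)} < b_{\pi(j)}$. Let $\pi'$ agree with $\pi$ except that it exchanges the values assigned to positions $i$ and $j$. A direct computation gives
\[
\sum_{k} b_{\pi'(k)} c_k - \sum_{k} b_{\pi(k)} c_k
= \bigl(b_{\pi(j)} - b_{\pi(i)}\bigr)\bigl(c_i - c_j\bigr),
\]
and both factors are positive, since $b_{\pi(i)} < b_{\pi(j)}$ and $c_i > c_j$ for $i<j$, so the swap strictly increases the sum. First I would record this identity, as it is the only computation in the proof.

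Then I would run the exchange argument. Because there are only finitely many permutations, a maximizer $\pi^*$ exists. If $\pi^*$ had any inversion, the swap above would produce a strictly larger sum, a contradiction; hence $b_{\pi^*(1)} \ge b_{\pi^*(2)} \ge \cdots \ge b_{\pi^*(m)}$, that is, the maximizer puts the $b$'s in decreasing order. This decreasing value equals $\sum_i b_{\tau(m+1-i)} c_i$, establishing $\sum_i b_i c_i \le \sum_i b_{\tau(m+1-i)} c_i$. The left-hand inequality then follows by the mirror-image argument (a swap removing an inversion relative to \emph{increasing} order decreases the sum), or simply by applying the right-hand inequality to $-b_1,\dots,-b_m$ and clearing the sign.

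The one point requiring a word of care is ties among the $b_i$. Strict inequality in the swap identity uses $b_{\pi(i)} \neq b_{\pi(j)}$; when equalities occur, the decreasing arrangement is no longer unique, but any two decreasing arrangements differ only by transpositions of equal $b$-values and hence give the same sum, so the maximal value $\sum_i b_{\tau(m+1-i)} c_i$ is still well defined and still dominates every other arrangement (now with a non-strict $\ge$). This bookkeeping around equal entries is the only real obstacle; the inequality itself is entirely elementary.
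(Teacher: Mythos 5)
Your proof is correct. The paper does not actually prove Lemma~\ref{lem:3}; it simply cites Hardy--Littlewood--P\'olya \cite[p.~261]{hlp}, so there is no argument in the text to compare against. Your adjacent-swap identity $(b_{\pi(j)}-b_{\pi(i)})(c_i-c_j)$ is the standard engine for this inequality, the finiteness-of-permutations argument for the existence of a maximizer is sound, the reduction of the left-hand inequality to the right-hand one via $b_i \mapsto -b_i$ is clean, and your remark about ties is exactly the right point to flag: when equal $b$-values occur the decreasing arrangement is not unique as a permutation, but the sorted sequence of values is, so all decreasing arrangements yield the same sum. This gives a complete, self-contained proof where the paper offers only a reference.
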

This result, and a proof of it, can be found in \cite[p. 261]{hlp}.

\begin{lemma}
\label{lem:4}  
Suppose that $p$ and $q$ are relatively prime
integers and $q \geq 2$.  
Then for every positive integer $n$
\[
\left[ {np}/{q} \right] - \sum^{n}_{k=1} \f{[kp/q]}{k} 
< \left[ {(n+q)p} / {q} \right] - \sum^{n+q}_{k=1} \f{[kp/q]}{k}.
\]
In other words, $f_n(p/q) < f_{n+q}(p/q)$.
\end{lemma}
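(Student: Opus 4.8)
The plan is to compute the difference $f_{n+q}(p/q)-f_n(p/q)$ directly and exhibit it as a sum of nonnegative terms, at least one of which is strictly positive. Writing $x=p/q$, the key structural fact is that the map $k\mapsto [kx]$ is periodic modulo $q$ in a suitable sense: since $p$ is an integer, $(n+q)x = nx+p$ gives $[(n+q)x]-[nx]=p$. Hence
\[
f_{n+q}(p/q)-f_n(p/q) \;=\; p \;-\; \sum_{k=n+1}^{n+q}\frac{[kp/q]}{k}.
\]
So everything reduces to showing that the sum over the block of $q$ consecutive indices $k=n+1,\dots,n+q$ is strictly less than $p$.

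Next I would write each floor in terms of its fractional part, $[kp/q]=kp/q-\{kp/q\}$, where $\{t\}$ denotes the fractional part of $t$. Because the block contains exactly $q$ indices, the ``main'' contributions sum to $\sum_{k=n+1}^{n+q} p/q = p$, which cancels the leading $p$ above and leaves
\[
f_{n+q}(p/q)-f_n(p/q) \;=\; \sum_{k=n+1}^{n+q}\frac{\{kp/q\}}{k}.
\]
The strict inequality to be proved is therefore equivalent to the strict positivity of this last sum.

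Finally I would argue positivity via a counting (equidistribution) observation. As $k$ ranges over the $q$ consecutive integers $n+1,\dots,n+q$, it traverses a complete residue system modulo $q$; since $\gcd(p,q)=1$, the products $kp$ also traverse a complete residue system modulo $q$, so the fractional parts $\{kp/q\}$ take each of the values $0,\tfrac1q,\dots,\tfrac{q-1}{q}$ exactly once. Every term $\{kp/q\}/k$ is then nonnegative, and because $q\ge 2$ at least one fractional part is a positive multiple of $1/q$ (only the unique index with $q\mid k$ contributes $0$). A sum of nonnegative terms containing a positive term is positive, which yields the claim. I do not expect a genuine obstacle here; the argument is essentially a one-period cancellation. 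The only point needing mild care is verifying that the two copies of $p$ cancel exactly, which uses precisely that the block length equals the denominator $q$, together with the coprimality $\gcd(p,q)=1$ to guarantee a full, and hence nondegenerate, sweep of residues.
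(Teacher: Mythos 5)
Your proof is correct and takes essentially the same route as the paper: both reduce the claim to showing that the block sum $\sum_{k=n+1}^{n+q}[kp/q]/k$ is strictly less than $p=qt$, using that $[(n+q)p/q]-[np/q]=p$. The only (harmless) difference is in how the strict inequality is certified --- you invoke the full equidistribution of $\{kp/q\}$ over $\{0,1/q,\dots,(q-1)/q\}$ to exhibit a positive term, whereas the paper merely observes that $(n+q)t$ and $(n+q-1)t$ cannot both be integers since $t=p/q$ is not an integer.
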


\begin{proof} 
Let $t = p/q$, and note that $t$ is not an integer. 
This implies 
that $(n+q)t$ and $(n+q-1)t$
cannot both be integers,
so either $[(n+q)t] < (n+q)t$ or $[(n+q-1)t] < (n+q-1)t$ 
(or both).
Thus $[kt]/k < t$ for $k = n + q$ or $n+q-1$; 
note also that $[kt]/k \leq t$ for any $t$ and any $k \geq 1$.  
We deduce that
\[
\sum^{n+q}_{k=n+1} \f{[kt]}{k} < \sum^{n+q}_{k=n+1} t = qt = [nt+qt] - [nt],
\]
where the last equality uses that $qt = p$ is an integer.
Adding $[nt] - \sum^{n+q}_{k=1} [kt]/k$ to both sides
of this relation yields the desired inequality. 
\end{proof}

Recall that $d_{n,x}$ denotes the smallest element 
$d$ of $\{1,\dots,n\}$ such that
$[dx]/d = x_n = \max\{[kx]/k: k = 1,2,\dots,n\}$.

\begin{proposition}
\label{prop:5}
Suppose that $d = d_{n,x}$ satisfies
$[dx] - d[x] \geq 2$; then
\[
[nx] - \sum^{n}_{k=1} \f{[kx]}{k} \geq \f{1}{3}.
\]
\end{proposition}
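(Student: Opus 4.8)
The plan is to sharpen the proof of Proposition~\ref{prop:21} in two ways: by retaining the nonnegative term that was discarded there, and by replacing the crude averaging bound \eqref{eq:8} with the rearrangement inequality, Lemma~\ref{lem:3}. As before write $t=x_n$ and $d=d_{n,x}$. By \eqref{eq:x-max} we have $f_n(x)=f_n(t)$, and the proof of Proposition~\ref{prop:21} shows that $d$ is the exact denominator of $t$, so $t=[x]+m/d$ with $\gcd(m,d)=1$ and $1\le m\le d-1$; here $m=[dx]-d[x]$, and the hypothesis $m\ge 2$ forces $d\ge 3$. Writing $\{y\}=y-[y]$ and letting $r\in\{0,1,\dots,d-1\}$ be the residue of $n$ modulo $d$, I would first record the exact consequence of \eqref{eq:5},
\[
f_n(t)=f_r(t)+\sum_{k=r+1}^{n}\frac{kt-[kt]}{k},
\]
where $f_r(t)=[rt]-\sum_{k=1}^{r}[kt]/k\ge 0$ by the Olympiad lemma and $n\ge r+d$.

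I would then split into two cases according to Corollary~\ref{cor:32} applied to the pair $(r,t)$: since $\{t\}=m/d$, that corollary gives $d_{r,t}=1$ precisely when $rm<d$. In the case $rm\ge d$ (so $r\ge 1$ and $d_{r,t}>1$), Proposition~\ref{prop:21} gives $f_r(t)\ge \frac16$, while \eqref{eq:8} and \eqref{eq:10} bound the tail below by $\frac{d-1}{2(2d-1)}\ge \frac16$; adding these yields $f_n(t)\ge \frac13$. In the remaining case $rm<d$ we have $f_r(t)=0$, and $m\ge 2$ forces $r<d/2$ (hence $r<d-1$); now the whole bound must come from the tail sum. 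The averaging estimate \eqref{eq:8} gives only $\frac{d-1}{2(r+d)}$, which exceeds $\frac13$ when $r\le(d-3)/2$ but is too weak for the larger values of $r$ permitted here.

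To treat $rm<d$ I would use that the values $\{kt\}$ for $k=r+1,\dots,r+d$ form the set $\{0,\tfrac1d,\dots,\tfrac{d-1}{d}\}$, with the single zero occurring at the interior index $k=d$ (interior since $r<d-1$). Discarding the nonnegative tail beyond $r+d$, I would apply Lemma~\ref{lem:3} to the $d-1$ nonzero values against the strictly decreasing weights $1/k$ over the indices $\{r+1,\dots,r+d\}\setminus\{d\}$; pairing $\tfrac1d<\tfrac2d<\cdots$ with the decreasing weights gives
\[
\sum_{k=r+1}^{n}\frac{kt-[kt]}{k}\ \ge\ \sum_{\ell=1}^{d-1}\frac{\ell/d}{k_\ell},
\]
where $k_1<k_2<\cdots<k_{d-1}$ are the indices in $\{r+1,\dots,r+d\}$ different from $d$. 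It then suffices to show that the right-hand side is at least $\frac13$ for every $d\ge 3$ and every $0\le r<d/2$.

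The last inequality is where I expect the genuine difficulty. It is sharp: at $(d,m,r)=(3,2,1)$ the right-hand side equals $\frac13$, matching the value $f_4(\tfrac23)=\tfrac13$, so no slack remains and, as noted, neither \eqref{eq:8} nor Proposition~\ref{prop:21} suffices there by itself. I would establish it by separating the sum at the gap $k=d$ and estimating the two partial harmonic sums $\sum 1/k$ over $[r+1,d-1]$ and over $[d+1,r+d]$, using $r<d/2$ to control their sizes; alternatively, one can invoke Lemma~\ref{lem:4} to reduce to the minimal index $n=d+r$, where the tail is a single length-$d$ window, and then verify the resulting inequality directly, checking that the bound is minimized at the tight base case $(d,r)=(3,1)$, where it equals $\frac13$.
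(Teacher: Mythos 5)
Your setup reproduces the paper's strategy: the exact identity coming from \eqref{eq:5}, the case split according to whether Corollary~\ref{cor:32} makes $f_r$ vanish or gives $f_r\ge\frac16$, the deduction $r\le[(d-1)/2]$ in the hard case, and the use of the rearrangement inequality (Lemma~\ref{lem:3}) on the window $\{r+1,\dots,r+d\}$. Your one genuine refinement --- excluding the index $k=d$, where $\{kt\}=0$, before applying Lemma~\ref{lem:3} --- is a real improvement: it makes the resulting bound equal to exactly $\frac13$ at $(d,r)=(3,1)$, whereas the paper's bound $\frac1d\sum_{j=0}^{d-1}\frac{j}{j+r+1}$ gives only $\frac{5}{18}$ there and forces the paper to treat $d=3$ by a separate argument via Lemma~\ref{lem:4} and the explicit computation \eqref{eq:19}. (Minor slip: when $r=0$ the index $k=d$ is the last, not an interior, element of the window; this is harmless.)

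The gap is that you never prove the inequality on which everything now rests, namely
\[
\sum_{\ell=1}^{d-1}\frac{\ell/d}{k_\ell}\ \ge\ \frac13
\qquad\text{for all } d\ge 3 \text{ and } 0\le r<d/2,
\]
where $k_1<\cdots<k_{d-1}$ enumerate $\{r+1,\dots,r+d\}\setminus\{d\}$. You correctly identify this as ``where the genuine difficulty'' lies and note that it is sharp at $(3,1)$, but you then only gesture at two possible strategies (splitting the harmonic sums at the gap $k=d$, or reducing via Lemma~\ref{lem:4} to $n=d+r$ and ``checking that the bound is minimized at $(3,1)$''). Neither is carried out, and neither is routine: because the bound has no slack at $(3,1)$, any estimate you use must be exact there, and a two-parameter monotonicity claim of the form ``the minimum over all $(d,r)$ occurs at $(3,1)$'' is precisely the kind of statement the paper has to work for --- it occupies \eqref{eq:14} through \eqref{eq:18} (worst-case substitution $r=[(d-1)/2]$, the parity split on $d$, the monotonicity statements \eqref{eq:16} and \eqref{eq:18}, and the numerical checks at $d=5,7$), plus the separate $d=3$ analysis. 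Until you supply an argument of comparable substance for your displayed inequality, the proposition is not proved. A secondary caution on your Lemma~\ref{lem:4} route: that lemma reduces $n$ to the minimal residue class representative for a \emph{fixed} value $t=m/d$, so the ``resulting inequality'' still depends on $m$ unless you first pass to the $m$-free rearrangement bound, at which point you are back to verifying the two-parameter inequality above.
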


\begin{proof} 
The supposition that $[dx] - d[x] \geq 2$ implies that
$d(x-[x]) \geq 2$.  
Therefore 
$d \geq 2/(x-[x]) > 2$
since $x-[x] < 1$.  
Hence
\begin{equation}
\label{eq:11}
d \geq 3. 
\end{equation}
Let $r$ denote the element of $\{0,1,\dots,d-1\}$ 
which is congruent to 
$n$ modulo~$d$.  

Suppose at first that $r(x-[x]) \geq 1$, 
so $x - [x] \geq 1/r$.  
Then by  Corollary~\ref{cor:32} (with $n$ replaced by $r$), 
$[rx] \geq \f{1}{6} + \sum^{r}_{k=1} [kx]/k$.  
This observation and \eqref{eq:x-max} imply that 
$[rx_n] \geq \f{1}{6} + \sum^{r}_{k=1} [kx_{n}]/k$.  
Hence, from \eqref{eq:5}, 
$$[nx_n] - \sum^{n}_{k=1} \f{[kx_n]}{k} \geq \f{1}{6} + 
\sum^{n}_{k=r+1} \f{kx_n-[kx_n]}{k}.$$
From \eqref{eq:8}, \eqref{eq:10}, and \eqref{eq:11},
this implies
$$[nx_n] - \sum^{n}_{k=1} \f{[kx_n]}{k} \geq 
\f{1}{6} +  \frac{1}{4 + \frac{2}{d-1}}
\geq \frac{1}{6} + \frac{1}{5} > \frac 1 3.$$ 
This inequality and \eqref{eq:x-max} imply that 
$[nx] - \sum^{n}_{k=1} [kx]/k  > 1/3$.

Suppose now that $r(x-[x]) < 1$.  
This inequality and 
the initial supposition that $[dx] - d[x] \geq 2$
imply that
$d(x-[x]) \geq 2 > 2r(x-[x])$.
Therefore $d > 2r $, so $d \geq 2r + 1$.  
Hence
\begin{equation}
\label{eq:12}
r \leq \left[\f{d-1}{2}\right].
\end{equation}

Inequality \eqref{eq:7} and the first inequality of \eqref{eq:8} imply that
\begin{equation}
\label{eq:13}
[nx] - \sum^{n}_{k=1} \f{[kx]}{k} \geq \sum^{r+d}_{k=r+1} 
\f{kx_n-[kx_n]}{k}.
\end{equation}
We seek a good lower bound for
$\sum^{r+d}_{k=r+1} (kx_n-[kx_n])/k$.  
Statement \eqref{eq:9} implies that
\[
\{kx_n-[kx_n]: k = r+1,\, r + 2, \ldots,r+d\} 
= \left\{\f{0}{d},\,
\f{1}{d}, \ldots ,\, \f{d-1}{d}\right\}.
\]
This observation and Lemma \ref{lem:3},
with $\{b_1,\dots,b_d\} =  \{kx_n-[kx_n]: k = {r+1}, \ldots, {r+d} \}$ 
and 
$\{c_1,\dots , c_d\} = \{\fb{1}{r+1}, \dots, \fb{1}{r+d}\} $,
imply that 
\begin{align*}
\sum^{r+d}_{k=r+1} \f{kx_n-[kx_n]}{k} 
&\geq \f{1}{d} \sum_{j=0}^{d-1}
\frac{j}{j+r+1} .
\end{align*}
Similar inequalities can be found in 
\cite[pp. 92, 93]{kl}.
From \eqref{eq:12}, we obtain
\begin{align}
\label{eq:14}
\sum^{r+d}_{k=r+1} \f{kx_n-[kx_n]}{k}
&\geq \f{1}{d} \sum^{d-1}_{j=0} {\frac{j}{ j+ [(d-1)/2]+1 }} 
\nonumber \\
&= \frac{1}{d} \sum^{d-1}_{j=0} 
\left(1 - \frac{[(d-1)/2] +1}{j+[(d-1)/2]+1}\right) \nonumber \\
&= 1 - \frac{1}{d} \sum_{j=0}^{d-1} \frac{[(d-1)/2]+1}{j + [(d-1)/2] + 1} .
\end{align}

Define $h = [\ft{d-1}{2}] + 1$.  
Suppose at first that $d$ is even.  
Then $d = 2h$ and by \eqref{eq:11} we have
$h  \geq 2$.  From \eqref{eq:14}, we obtain
\begin{align}
\label{eq:15}
\sum^{r+d}_{k=r+1} \f{kx_n-[kx_n]}{k} 
&\geq 1 - \f{1}{2h} \sum^{2h-1}_{j=0} \f{h}{j+h}
\nonumber \\
&= 1 - \f{1}{2} \sum^{3h-1}_{j=h} \f{1}{j} 
\qquad\quad \text{if  $d_{n,x}$ is even}.
\end{align}
Note that
\begin{equation}
\label{eq:16}
\sum^{3m-1}_{j=m} \f{1}{j} \quad 
\text{is a decreasing function of $m$ for $m \geq 1$},
\end{equation}
because 
\[
\sum^{3(m+1)-1}_{j=m+1} \frac1j - \sum^{3m-1}_{j=m} \frac1j
= \f{1}{3m} + \f{1}{3m+1} + \f{1}{3m+2} - \frac1{m} < 0 .\]
Statements \eqref{eq:15} and \eqref{eq:16}, 
together with the fact that $h \geq 2$, 
imply that
\[
\sum^{r+d}_{k=r+1} \f{kx_n-[kx_n]}{k} \geq 1 - \f{1}{2} \sum^{5}_{j=2}
\dfrac{1}{j} = \f{43}{120} > \f{1}{3}.
\]
This inequality and \eqref{eq:13} establish the proposition 
when $d$ is even.

Suppose now that $d$ is odd.  
Then $h = [\ft{d-1}{2}] + 1 = \ft{d+1}{2}$, 
so $d = 2h - 1$.  
From \eqref{eq:14}, we obtain
\begin{align}
\label{eq:17}
\sum^{r+d}_{k=r+1} \f{kx_n-[kx_n]}{k} 
&\geq  1 - \f{1}{2h-1} \sum^{2h-2}_{j=0} \f{h}{j+h} \nonumber \\
&= 1 - \f{h}{2h-1} \sum^{3h-2}_{k=h} \f{1}{k} \nonumber \\
&>  1 - \frac{h}{2h-1}\sum_{k=h}^{3h-1} \frac{1}{k} 
\qquad\quad \text{if $d_{n,x}$ is odd} .
\end{align} 
Note that 
$\frac{h}{2h-1} = \frac12 + \frac1{2(2h-1)}$ 
is a decreasing function of $h$.
Using \eqref{eq:16}, this implies 
\begin{equation}
\label{eq:18}
\frac{h}{2h-1}  \sum^{3h-1}_{k=h} \frac{1}{k}
\quad\text{is a decreasing function of $h$}.
\end{equation}
Statements \eqref{eq:17} and \eqref{eq:18} imply that, 
if $h \geq 5 $, then
\[
\sum^{r+d}_{k=r+1} \f{kx_n-[kx_n]}{k} 
> 1 - \frac{5}{9} \sum^{14}_{k=5} \f{1}{k} \approx 0.351.
\]
This inequality and relation \eqref{eq:13} establish the proposition 
when $d$ is odd and $d \geq 9$.  

One verifies, using \eqref{eq:13} and \eqref{eq:14}, 
that the proposition also holds when 
$d = 5$ or $d = 7$
since
\begin{equation*}
1 - \frac{1}{5} \sum_{j=0}^4 \frac{3}{j+3} \approx 0.344
\qquad\text{and}\qquad
1 - \frac{1}{7} \sum_{j=0}^6 \frac{4}{j+4} \approx 0.374 .
\end{equation*}
To finish the proof, by observation \eqref{eq:11}
it suffices to consider the case that $d = 3$.

Assume that $d = 3$.  
Note that $d(x - [x]) < d = 3$; hence $[dx] - d[x] \leq 2$.  
The initial supposition that $[dx] - d[x] \geq 2$
implies that $[dx] - d[x] = 2$.  
Hence $[dx]/d - [x] = 2/d = 2/3$, so $x_n - [x] = 2/3$.  
From \eqref{eq:x-max}, we obtain
\begin{equation}
\label{eq:19}
[nx] - \sum^{n}_{k=1} \f{[kx]}{k} = [nx_n] - \sum^{n}_{k=1} 
\f{[kx_n]}{k} = [2n/3] - \sum^{n}_{k=1}
\f{\left[2k/3\right]}{k}.
\end{equation}
Observe that $[2n/3] - \sum^{n}_{k=1} \left[2k/3\right]/k \geq 1/3$ 
when $n = 3$, $4$ or $5$.  
This observation and Lemma~\ref{lem:4} 
(with $p/q = 2/3$) imply that 
$[2n/3] - \sum^{n}_{k=1} \left[2k/3\right]/k  \geq 1/3$ 
for all $n \geq 3$.  
Since $n \geq d = 3$, 
statement \eqref{eq:19} implies
the proposition when $d = 3$.
\end{proof}

\section{Smallest limit point of $S$}
\label{sec:limit-point}
In this section we address how
the value of 
 $[nx] - \sum^{n}_{k=1} [kx]/k$,
 for certain $n$ and $x$,
 is related to the series
 $\sum^{\infty}_{k=1} {1}/{(2k(2k+1))}$
 and its partial sums.
\begin{proposition}
\label{prop:31}
Define $\displaystyle \lambda = \sum^{\infty}_{k=1} \dfrac{1}{2k(2k+1)}$.
\begin{enumerate}[(i)]
\item If $d = d_{n,x}$ satisfies 
$[dx] - d[x] = 1$ and $n = 2d-1$, then 
\[
[nx] - \sum^{n}_{k=1} \f{[kx]}{k} = \sum^{d-1}_{k=1} \f{1}{2k(2k+1)}.
\]
\item Suppose that $d = d_{n,x} > 2$.  
If $[dx] - d[x] \neq 1$ or $n \neq 2d-1$, then 
\[
[nx] - \sum^{n}_{k=1} \f{[kx]}{k} > \lambda.
\]
\item  Suppose that $d_{n,x} = 2$.  
If $n = 3$, then 
$[nx] - \sum^{n}_{k=1} [kx]/k = 1/6$. 
If
$n = 5$, then 
$[nx] - \sum^{n}_{k=1} [kx]/k = 4/15$. 
If $n \neq 3$ and $n \neq 5$, then 
$$[nx] -\sum^{n}_{k=1} \frac{[kx]}{k} \geq \frac{71}{210} > \lambda. $$
\end{enumerate}
\end{proposition}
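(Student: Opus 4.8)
The plan is to reduce all three parts to the study of the single‑variable function $n\mapsto f_n(1/d)$, where as usual $f_n(x)=[nx]-\sum_{k=1}^{n}[kx]/k$. Two normalizations come first. Since $[kx]=[kx_n]$ for $k\le n$ by \eqref{eq:x-max}, we have $f_n(x)=f_n(x_n)$; and since $[k(x+1)]=[kx]+k$ shows $f_n$ is unchanged under integer shifts of $x$, we may assume $[x]=0$, so $x_n=a/d$ with $a=[dx]-d[x]$. As noted after \eqref{eq:x-max}, $d$ is the denominator of $x_n$ in lowest terms, so $\gcd(a,d)=1$ and $1\le a\le d-1$ (here $d_{n,x}=d\ge 2$ forces $a\ge 1$). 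For part (ii) with $a\ge 2$ there is nothing to do: the hypothesis $[dx]-d[x]\ge 2$ is exactly that of Proposition~\ref{prop:5}, which gives $f_n\ge \tfrac13>\lambda$. Hence parts (i), (iii), and the remaining case $a=1$ of part (ii) all concern $x_n=1/d$. Writing $\langle t\rangle:=t-[t]$, the identity underlying \eqref{eq:5} with $x_n=1/d$ and $r\equiv n\pmod d$, $0\le r<d$, collapses (since $f_r(1/d)=0$ for $r<d$) to
\[
f_n(1/d)=\sum_{k=r+1}^{n}\frac{\langle k/d\rangle}{k},\qquad r=n\bmod d,
\]
while Lemma~\ref{lem:4} with $p/q=1/d$ gives $f_n(1/d)<f_{n+d}(1/d)$, i.e. $f_n(1/d)$ increases along each residue class modulo $d$.

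For part (i), take $n=2d-1$, so $r=d-1$ and the displayed sum runs over $k=d,\dots,2d-1$, giving $f_{2d-1}(1/d)=\tfrac1d\sum_{j=1}^{d-1}\tfrac{j}{d+j}$. I would evaluate this in closed form using $\tfrac{j}{d+j}=1-\tfrac{d}{d+j}$, and evaluate $t_{d-1}=\sum_{k=1}^{d-1}\big(\tfrac{1}{2k}-\tfrac{1}{2k+1}\big)$ by splitting $H_m:=\sum_{i\le m}1/i$ into its odd and even parts; both expressions reduce to $1+H_{d-1}-H_{2d-1}$, proving the identity. (Alternatively one inducts on $d$, as in the Remark.)

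The heart of the proof is part (ii) with $a=1$: I must show $f_n(1/d)>\lambda$ for every $n\ge d$, $n\ne 2d-1$, with $d\ge 3$. By the monotonicity above it suffices to bound, in each residue class, the smallest admissible $n$. Set $F(r):=f_{d+r}(1/d)=\tfrac{d-1-r}{d}+\tfrac1d\sum_{j=1}^{r}\tfrac{j}{d+j}$ for $0\le r\le d-1$; a one‑line computation gives $F(r+1)-F(r)=-\tfrac{1}{d+r+1}<0$, so $F$ strictly decreases with $F(d-1)=t_{d-1}$. For the classes $r=0,\dots,d-2$ the least value is thus $F(d-2)=t_{d-1}+\tfrac{1}{2d-1}$, which exceeds $\lambda$ since $\lambda-t_{d-1}=\sum_{k\ge d}\tfrac{1}{2k(2k+1)}<\tfrac{1}{2d}<\tfrac{1}{2d-1}$ by the alternating‑series bound. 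The remaining class $r=d-1$ is the delicate one: its smallest admissible value is the excluded $2d-1$, so after deleting it the least value is $f_{3d-1}(1/d)=t_{d-1}+\tfrac1d\sum_{j=1}^{d-1}\tfrac{j}{2d+j}$, and I must prove
\[
\frac1d\sum_{j=1}^{d-1}\frac{j}{2d+j}>\sum_{k=d}^{\infty}\frac{1}{2k(2k+1)} .
\]
Here I would bound the right side by $\tfrac{1}{2(2d-1)}$ (telescoping $\tfrac{1}{2k(2k+1)}<\tfrac{1}{(2k-1)(2k+1)}=\tfrac12(\tfrac1{2k-1}-\tfrac1{2k+1})$) and the left side below by $\tfrac{d-1}{6d}$ (since $2d+j<3d$), which reduces the claim to $2d^2-6d+1>0$, valid for all $d\ge 3$. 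With Lemma~\ref{lem:4} this yields $f_n(1/d)>\lambda$ for every $n\ge d$ except $2d-1$, completing part (ii).

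Finally part (iii) is the case $d=2$, which forces $a=1$ and $x_n=1/2$, so $f_n$ depends on $n$ alone. Lemma~\ref{lem:4} with $p/q=1/2$ shows $f_n(1/2)$ increases along each parity class, so it suffices to evaluate the base cases $f_2=\tfrac12,\ f_3=\tfrac16,\ f_4=\tfrac23,\ f_5=\tfrac4{15},\ f_7=\tfrac{71}{210}$. Then even $n\ge 2$ give $f_n\ge\tfrac12$ and odd $n\ge 7$ give $f_n\ge\tfrac{71}{210}$; since $\tfrac{71}{210}\approx0.338>\lambda$, every $n\ne 3,5$ gives $f_n\ge\tfrac{71}{210}>\lambda$, while $n=3,5$ give the stated $\tfrac16,\tfrac4{15}$. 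The main obstacle throughout is the class $r=d-1$ of part (ii): because $f_{2d-1}(1/d)=t_{d-1}$ sits just below $\lambda$ while the very next value $f_{3d-1}(1/d)$ must lie above it, this crossing forces the quantitative tail estimate displayed above rather than any soft monotonicity argument.
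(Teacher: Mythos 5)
Your proof is correct, and its skeleton coincides with the paper's: reduce via \eqref{eq:x-max} and integer-shift invariance to $x_n-[x]=a/d$ with $a=[dx]-d[x]$, dispose of $a\ge 2$ by Proposition~\ref{prop:5} together with $\lambda<\tfrac13$, reduce the case $a=1$ to the one-variable function $n\mapsto f_n(1/d)$, identify $f_{2d-1}(1/d)$ with the partial sum $t_{d-1}$ via the identity \eqref{eq:21}, use Lemma~\ref{lem:4} to propagate base-case bounds along residue classes modulo $d$, and finish $d=2$ by the explicit values at small $n$ (your even/odd split with $f_2=\tfrac12$ and $f_7=\tfrac{71}{210}$ covers the same ground as the paper's check of $n=2,\dots,7$). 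Where you genuinely diverge is the middle of part (ii) for $a=1$, $d\ge 3$: the paper splits on $n\le 2d-2$ versus $n\ge 2d$, bounds $f_n(1/d)$ below in each range by a quantity shown to be monotone in $d$, and evaluates numerically at $d=3$ (yielding the constant $\tfrac{73}{210}$ and a comparison with $\tfrac13$); you instead organize by residue class, derive the clean recursion $F(r+1)-F(r)=-1/(d+r+1)$ for $F(r)=f_{d+r}(1/d)$, and prove the two needed inequalities $F(d-2)>\lambda$ and $f_{3d-1}(1/d)>\lambda$ by closed-form tail estimates (the alternating-series bound $\lambda-t_{d-1}<\tfrac1{2d}$, and the reduction of $\tfrac{d-1}{6d}>\tfrac{1}{2(2d-1)}$ to $2d^2-6d+1>0$ for $d\ge3$). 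Note that your $F(d-2)$ equals the paper's $1-\sum_{k=d}^{2d-2}1/k$, so the two arguments bound the same critical quantities; your version makes explicit that $n=3d-1$ is the unique near-critical competitor to $n=2d-1$ and replaces numerical monotone-in-$d$ evaluations with algebraic inequalities, at the cost of slightly more bookkeeping. Both proofs are complete.
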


\begin{proof}  
Observe that \eqref{eq:x-max} implies
\begin{equation}
\label{eq:20}
[nx] - \sum^{n}_{k=1} \f{[kx]}{k} = [nx_n] - 
\sum^{n}_{k=1} \f{[kx_n]}{k} 
= [n(x_n - [x])] - \sum^{n}_{k=1}
\f{[k(x_n-[x])]}{k}. 
\end{equation}
One can easily prove by induction on $m$ that
\begin{equation}
\label{eq:21}
1 - \sum^{2m-1}_{k=m} \f{1}{k} = \sum^{m-1}_{k=1} 
\f{1}{2k(2k+1)}.
\end{equation}

Suppose that $[dx] - d[x] = 1$.  
This supposition and the fact that
$x_n =[dx]/d$ imply that $x_n - [x] = 1/d$.  
(It follows that $d\geq 2$.)
Hence, from \eqref{eq:20},
\begin{equation}
\label{eq:22}
[nx] - \sum^{n}_{k=1} \f{[kx]}{k} = \left[ {n}/{d} \right] - 
\sum^{n}_{k=1} \f{[k/d]}{k} 
\qquad \text{provided } [dx]-d[x] = 1.
\end{equation}
If in addition $n = 2d-1$, then by \eqref{eq:21}
\[
[nx] - \sum^n_{k=1} \f{[kx]}{k} = \left[ \f{2d-1}{d} \right] -
\sum^{2d-1}_{k=1} \f{[k/d]}{k} 
= 1 - \sum^{2d-1}_{k=d} \f{1}{k} 
= \sum^{d-1}_{k=1} \f{1}{2k(2k+1)}.
\]
This establishes statement (i) of the proposition.

Observe that
$\sum^{2m-2}_{k=m} 1/k$ is an increasing function of $m\geq 1$, 
because
\[\sum^{2(m+1)-2}_{k=m+1} \frac{1}{k} - \sum^{2m-2}_{k=m} \frac{1}{k}
 = \f{1}{2m-1} + \f{1}{2m} - \f{1}{m} > 0.\]
Therefore
\[
1 - \sum^{2m-2}_{k=m} \f{1}{k} >
{\lim_{m\to\infty}} \left(1 - \sum^{2m-2}_{k=m} 
\f{1}{k}\right)
= {\lim_{m\to\infty}} \left(1 - 
\sum^{2m-1}_{k=m} \f{1}{k}\right).
\]
From \eqref{eq:21}, we deduce
\begin{equation}
\label{eq:23}
1 - \sum^{2m-2}_{k=m} \f{1}{k} > \sum^{\infty}_{k=1} 
\f{1}{2k(2k+1)} = \lambda.
\end{equation}

Now suppose $[dx] - d[x] = 1$ and $n < 2d-1$.
Recall, from the definition of $d = d_{n,x}$, 
that $d \leq n$.  
Therefore, if $n < 2d-1$, then $d \leq n \leq 2d-2$, so $[n/d] = 1$.  
Using \eqref{eq:23} this implies
\[
[n/d] - \sum^{n}_{k=1} \f{[k/d]}{k} = 1 - 
\sum^{n}_{k=d} \f{1}{k} 
\geq 1 - \sum^{2d-2}_{k=d} \f{1}{k}
> 
\lambda
\qquad \text{if } n < 2d-1.
\]
From this inequality and \eqref{eq:22}, we get
\begin{equation}
\label{eq:24}
[nx] - \sum^{n}_{k=1} \f{[kx]}{k} > \lambda \qquad \text{if } \ 
[dx]-d[x] = 1 \text{ and } n < 2d-1.
\end{equation}

As mentioned in the introduction, $\lambda = 1 - \log 2$.  
This can be obtained from \eqref{eq:21} by comparing 
the sum on the left-hand side to an integral.  
We deduce that
\begin{equation}
\label{eq:25}
\lambda = \sum_{k=1}^\infty \frac{1}{2k(2k+1)} < \dfrac{1}{3}.
\end{equation}
This inequality can in fact be  
established without evaluating $\lambda$
explicitly by rewriting the sum defining $\lambda$ as a telescoping series.

Now suppose $[dx] - d[x] = 1$ and $n > 2d-1$.
Observe that if $2d \leq n \leq 3d-1$, then
\begin{equation}
\label{eq:26}
\left[ {n}/{d} \right] - \sum^{n}_{k=1} \f{[k/d]}{k} 
= 2 - \sum^{n}_{k=1} \f{[k/d]}{k}
\geq 2 - \sum^{2d-1}_{k=d} \f{1}{k} - 2 \sum^{3d-1}_{k=2d}
\f{1}{k}.
\end{equation}
Earlier in this proof, 
we showed that $\sum^{2m-2}_{k=m} 1/k \phantom{\Big)}\!\!$ 
is an increasing function of $m\geq 1$.  
A similar argument establishes that 
$\sum^{2m-1}_{k=m} 1/k$ and $\sum^{3m-1}_{k=2m} 1/k$ 
are decreasing functions of $m$. 
This observation and relation \eqref{eq:26} imply that, 
if $2d \leq n \leq 3d-1$ and $d \geq 3$, then 
\[
\left[ {n}/{d} \right] - \sum^{n}_{k=1} \f{[k/d]}{k} 
\geq 2 - \sum^{5}_{k=3} \f{1}{k} - 2 \sum^{8}_{k=6}\f{1}{k} 
= \frac{73}{210} 
 > \frac{1}{3}.
\]
From Lemma~\ref{lem:4} (with $p/q = 1/d$), we deduce
that if $ n \geq 2d$ and $d \geq 3$, then 
$[n/d] - \sum^{n}_{k=1} [k/d]/k > 1/3 $.  
From \eqref{eq:22} and \eqref{eq:25},
\begin{equation}
\label{eq:27}
[nx] - \sum^{n}_{k=1} \f{[kx]}{k} > \lambda \qquad
\text{if } [dx]-d[x] = 1 \text{ and }  n > 2d-1
\text{ and } d > 2.
\end{equation}

Suppose now that $[dx]-d[x] \neq 1$ and $d > 1$.  
The definition of $d=d_{n,x}$ and the
supposition that $d > 1$ imply that $[dx]/d > [x]$.  
Hence $[dx]-d[x] > 0$.  
Since $[dx]-d[x] \neq 1$, we obtain $[dx]-d[x] \geq 2$.  
Therefore, from Proposition~\ref{prop:5} and from~\eqref{eq:25},
\[
[nx] - \sum^{n}_{k=1} \f{[kx]}{k} \geq \f{1}{3} > \lambda
\qquad \text{if } [dx]-d[x] \neq 1 \text{ and } d>1. 
\]
This inequality and relations \eqref{eq:24} and \eqref{eq:27} 
establish statement (ii) of the proposition.

Suppose now that $d = 2$.  
Note that $[dx]-d[x] \leq d(x-[x]) < d = 2$.  
Hence $[dx]-d[x] \leq 1$.  
Note also that, by the definition of $d = d_{n,x}$ and the supposition
that $d \neq 1$, 
we have $[dx]/d > [x]$, so $[dx]-d[x] > 0$.  
Thus, $[dx]-d[x] = 1$. 
From \eqref{eq:22}, we obtain
\begin{equation}
\label{eq:28}
[nx] - \sum^{n}_{k=1} \f{[kx]}{k} = 
\left[ {n}/{2} \right] - \sum^{n}_{k=1} \frac{[k/2]}{k}.
\end{equation}
Note that $[n/2] - \sum^{n}_{k=1} [k/2]/k \geq 71/210 > 1/3$ 
when $n = 6$ or $7$.  
Hence, Lemma~\ref{lem:4} (with $p/q = 1/2)$ implies that 
$[n/2] - \sum^{n}_{k=1} [k/2]/k > 1/3$ 
for all $n \geq 6$.  
Now, \eqref{eq:25} and \eqref{eq:28} establish 
statement (iii) of the proposition for $n \geq 6$.  
Recall that $n \geq d$, so $n \geq 2$.  
One can verify, using \eqref{eq:25} and \eqref{eq:28}, 
that (iii) holds for $n = 2$, $3$, $4$ and $5$.  
Hence it holds for all $n$.  
\end{proof}

Proposition~\ref{prop:31}
implies that, for most pairs $(n,x)$ 
(especially when $n$ is large), 
$[nx] - \sum^{n}_{k=1} [kx]/k > \lambda $.

\section{Proof of main theorem}
\label{sec:main-proof}
We now prove the main result of this paper
(in equivalent form
 \eqref{eq:summary}).

\begin{theorem}
\label{thm:main}
Let $S$ denote the set of numbers of the form
$[nx] - \sum^{n}_{k=1} [kx]/k$, where $x$ varies over all
real numbers and $n$ varies over all positive integers. 
Then 
\begin{align*}
S = &\left( \text{the set of 
partial sums of}\quad 
\sum^{\infty}_{k=1} \f{1}{2k(2k+1)} \right)
\bigcup  \left\{ 0, \f{4}{15} \right\} \\ 
&\bigcup \left( \text{a dense subset of the interval}
\quad\bigg(\sum^\infty_{k=1} \f{1}{2k(2k+1)}, \infty\bigg) \right).
\end{align*}
\end{theorem}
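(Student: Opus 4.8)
The plan is to cut $S$ at the threshold $\lambda=\sum_{k\ge1}1/(2k(2k+1))$. The propositions of Sections~\ref{sec:prelim} and~\ref{sec:limit-point} already determine $S$ below $\lambda$ and show $\lambda\notin S$, so the substantive new work is to prove that $S\cap(\lambda,\infty)$ is dense in $(\lambda,\infty)$. Once that is done, the ``dense subset'' in the statement is simply $S\cap(\lambda,\infty)$ itself and the decomposition follows.

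First I would settle the part below $\lambda$ by a case analysis on the pair $(d,e)$ with $d=d_{n,x}$ and $e=[dx]-d[x]$. If $d=1$, Proposition~\ref{prop:21} gives $f_n(x)=0=t_0$. If $d\ge2$ then $[dx]/d>[x]$, so $e\ge1$. For $d=2$ one has $e\le 2(x-[x])<2$, hence $e=1$, and Proposition~\ref{prop:31}(iii) yields $f_n(x)=t_1=\tfrac16$ (if $n=3$), $f_n(x)=\tfrac4{15}$ (if $n=5$), or $f_n(x)\ge\tfrac{71}{210}>\lambda$. For $d\ge3$: if $e\ge2$ then Proposition~\ref{prop:5} gives $f_n(x)\ge\tfrac13>\lambda$; if $e=1$ then Proposition~\ref{prop:31}(i)--(ii) gives $f_n(x)=t_{d-1}$ when $n=2d-1$ and $f_n(x)>\lambda$ otherwise. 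Reading off the outcomes that are smaller than $\lambda$ leaves exactly $\{t_m:m\ge0\}\cup\{\tfrac4{15}\}$ (recall $t_0=0$, $t_1=\tfrac16$); each is attained (see the Remark), and every remaining pair $(n,x)$ gives $f_n(x)>\lambda$. This simultaneously proves $\lambda\notin S$ and that $\lambda$ is the smallest limit point.

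The core is density in $(\lambda,\infty)$, which I would obtain from the single family $x=1/q$. Writing $\{t\}:=t-[t]$, the defining sum rearranges to
\[
f_n(x)=\sum_{k=1}^{n}\frac{\{kx\}}{k}-\{nx\}.
\]
For $x=1/q$ this renders $n\mapsto f_n(1/q)$ explicit: on each block $n\in[mq,(m+1)q-1]$ the value strictly decreases, each single step being less than $1/q$, and at every multiple of $q$ it jumps up by exactly $(q-1)/q$. The graph is thus a sequence of descending ``teeth''; the $m$-th tooth has top $f_{mq}(1/q)$ and bottom $f_{(m+1)q-1}(1/q)$, and its values are $1/q$-dense in $[f_{(m+1)q-1}(1/q),\,f_{mq}(1/q)]$. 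The lowest bottom is $f_{2q-1}(1/q)=t_{q-1}$ by Proposition~\ref{prop:31}(i), while the tops tend to $+\infty$ since $\sum_{k\le N}\{k/q\}/k$ diverges. Consecutive teeth overlap: from
\[
f_{(m+2)q}(1/q)-f_{mq}(1/q)=\sum_{k=mq+1}^{(m+2)q}\frac{\{k/q\}}{k}<\frac1{2m}+\frac1{2(m+1)}\le\frac34<1
\]
together with the jump size $(q-1)/q$, one gets, for $q\ge5$, that the bottom of the $(m{+}1)$-st tooth lies below the top of the $m$-th. Hence $\{f_n(1/q):n\ge q\}$ is $1/q$-dense in $[t_{q-1},\infty)$. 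Density of $S$ in $(\lambda,\infty)$ is then immediate: given $y>\lambda$ and $\varepsilon>0$, pick $q\ge5$ with $1/q<\min(\varepsilon,\,y-\lambda)$; since $t_{q-1}<\lambda<y$, some $f_n(1/q)$ lies within $1/q<\varepsilon$ of $y$ and exceeds $y-1/q>\lambda$.

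I expect the density half to be the main obstacle, and within it the uniformity required to reach all the way to $+\infty$ rather than a bounded interval above $\lambda$: the within-tooth spacing is small only for large $q$, yet one must also guarantee that successive teeth overlap (the two-period estimate) so that no finite ceiling appears. A related subtlety is that targets near $\lambda$ get approximated only at indices $n$ of the same order as the period $q$ (around $n=2q$), where the full-period increments governed by Lemma~\ref{lem:4} do not tend to $0$ and are therefore too coarse; it is precisely the single-step within-tooth descent of size less than $1/q$, interrupted by the large upward jumps, that makes this regime usable, and organizing that bookkeeping is the crux.
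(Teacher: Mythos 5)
Your proposal is correct and follows essentially the same route as the paper: the sub-$\lambda$ part of $S$ is read off from Propositions~\ref{prop:21}, \ref{prop:5}, and \ref{prop:31} exactly as here, and density in $(\lambda,\infty)$ is obtained from the same family $x=1/t$ by exploiting the sawtooth behavior of $n\mapsto f_n(1/t)$ (within-tooth steps smaller than $1/t$, overlapping consecutive teeth, tops tending to $+\infty$). The only difference is cosmetic: you establish the tooth overlap uniformly in $m$ via the crude two-period bound $\tfrac{1}{2m}+\tfrac{1}{2(m+1)}\le\tfrac34$ (hence the restriction $t\ge 5$, which is harmless), whereas the paper isolates the single relevant tooth $\hat m$ by a maximality argument and verifies that one overlap by the exact computation \eqref{eq:32}.
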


\begin{proof}
Proposition~\ref{prop:21} implies that if $d_{n,x} = 1$, 
then $[nx] - \sum^{n}_{k=1} [kx]/k = 0$, 
and Proposition~\ref{prop:31} implies that if $d_{n,x} \geq 2$, 
then 
$[nx] - \sum^{n}_{k=1} [kx]/k$ equals a partial sum of 
$\sum^{\infty}_{k=1} 1/(2k(2k+1))$ or $4/15$ 
or a number which is strictly greater than
$\sum^{\infty}_{k=1} 1/(2k(2k+1))$.  
Hence 
\begin{align}
\label{eq:29}
S \subseteq \left\{ \text{partial sums of } \sum^{\infty}_{k=1} 
\f{1}{2k(2k+1)} \right\} 
\cup  \left\{0, \f{4}{15}\right\}  
\cup \ \left(\sum^{\infty}_{k=1}
\f{1}{2k(2k+1)}, \infty\right).
\end{align}
It was observed in the introduction that $S$ contains $0$ and $4/15$
and all the partial sums of the series $\sum^{\infty}_{k=1} 1/(2k(2k+1))$.  
This observation and statement \eqref{eq:29} imply that, to finish the proof, 
it suffices to show that $S$
contains a dense subset of the interval
$(\sum^{\infty}_{k=1} 1/(2k(2k+1)), \infty)$.

Let $u$ denote a real number such that 
${u \geq \sum^{\infty}_{k=1} 1/(2k(2k+1)) }$.  
It will be shown that there are elements of
$S$ which are arbitrarily close to $u$.  
Let $t$ denote an integer such that $t \geq 2$.

\medskip \noindent
\textbf{Claim.}
\textit{There is a positive integer $\hat m = \hat m_{u,t}$ such that 
\[
\hat m - \sum^{\hat m t+t-1}_{k=1} \f{[k/t]}{k} < u < 
\hat m - \sum^{\hat m t}_{k=1} \f{[k/t]}{k}.
\]
In other words, $f_{\hat m t + t - 1}(1/t) < u < f_{\hat m t}(1/t)$.}

\begin{proof}[Proof of the claim] 
Observe that by \eqref{eq:21}
\begin{equation}
\label{eq:30}
1 - \sum^{2t-1}_{k=1} \f{[k/t]}{k} = 1 - 
\sum^{2t-1}_{k=t} \f{1}{k} = \sum^{t-1}_{k=1} \f{1}{2k(2k+1)} < u.
\end{equation}
Note that, for every positive integer $m$,
\begin{align*}
m - \sum^{mt+t-1}_{k=1} \f{[k/t]}{k} &= \sum^{m}_{j=1} \left(1 -
\sum^{jt+t-1}_{k=jt} \f{j}{k}\right)
= \sum^{m}_{j=1} \left(\sum^{jt+t-1}_{k=jt} \f{k-jt}{kt}\right) \\
&\geq \sum^{m}_{j=1} \f{1}{(jt+t-1)t} \sum^{jt+t-1}_{k=jt} (k-jt) \\
&= \sum^{m}_{j=1} \f{t-1}{2(jt+t-1)}
= \f{1}{2} \sum^{m}_{j=1} \f{1}{j\f{t}{t-1} +1}
\geq \f{1}{6} \sum^{m}_{j=1} \f{1}{j},
\end{align*}
where in the last step we have used that $t/(t-1) \le 2$ for any $t \ge 2$.
This inequality and the fact that $\sum^{\infty}_{j=1} 1/j$
diverges imply that there are only finitely many positive integers $m$ such that
$m - \sum^{mt+t-1}_{k=1} [k/t]/k < u$.  
Let $\hat m = \hat m_{u,t}$ denote the largest such integer; 
statement \eqref{eq:30} implies that $\hat m$ exists with 
$\hat m \geq 1$.  
The definition of $\hat m$ implies that
\begin{equation}
\label{eq:31}
\hat m - \sum^{\hat m t+t-1}_{k=1} \f{[k/t]}{k} < u \leq \hat m +
1 - \sum^{(\hat m +1)t+t-1}_{k=1} \f{[k/t]}{k}.
\end{equation}
Observe that
\begin{align}
\label{eq:32}
1 - \sum^{(\hat m +1)t+t-1}_{k=\hat mt+1} \f{[k/t]}{k} 
&= 1 - \sum^{ {\hat m}t+t-1}_{k=\hat mt+1} \f{\hat m}{k} 
     - \sum^{({\hat m}+1)t+t-1}_{k=(\hat m+1)t} \f{\hat m+1}{k} \nonumber \\
&= 1 - \sum^{t-1}_{j=1} \f{\hat m}{\hat m t+j} 
    - \sum^{t-1}_{j=0} \f{\hat m+1}{(\hat m+1)t+j} \nonumber \\
&= \sum^{t-1}_{j=1} \left(\f{j/t}{(\hat m+1)t+j} - 
\f{\hat m}{\hat mt+j}\right),
\end{align}
where in the last line we have used that 
\[
1 - \sum^{t-1}_{j=0}
\f{\hat m+1}{(\hat m+1)t+j} = \sum^{t-1}_{j=0} \f{\hat m+1+(j/t)-(\hat
m+1)}{(\hat m+1)t+j}
= \sum_{j=1}^{t-1} \frac{j/t}{(\hat m +1)t +j} .
\]
Since $j/t < 1 \leq \hat m$ for $j < t$, we obtain from \eqref{eq:32} that
\[
1 - \sum^{(\hat m +1)t+t-1}_{k=\hat mt+1} \f{[k/t]}{k} < 0.
\]
Adding a constant to both sides of this inequality yields
\[
\hat m + 1 - \sum^{(\hat m+1)t+t-1}_{k=1} \f{[k/t]}{k} < \hat m 
- \sum^{\hat mt}_{k=1} \f{[k/t]}{k}.
\]
This inequality and relation \eqref{eq:31} establish the claim.
\end{proof}

Note that the distance between two adjacent elements of 
$\{\hat m - \sum^{n}_{k=1} [k/t]/k: 
{n = \hat mt},\, \hat mt+1, \dots , \hat mt+t-1\}$ 
is less than or equal to $\hat m/(\hat m t+1) < 1/t$.  
This observation and the claim imply that there is an
integer $\hat n = \hat n_{u,t}$ such that 
\begin{equation}
\label{eq:33}
\hat mt \leq \hat n \leq \hat mt+t-1 
\quad\text{and}\quad
\left| u - \left(\hat m - \sum^{\hat n}_{k=1} \f{[k/t]}{k}\right) \right| 
< \f{1}{t}.
\end{equation}
Define 
\[ s_{u,t} = f_{\hat n}(1/t) =  [\hat n/t] - \sum^{\hat n}_{k=1} \frac{[k/t]}{k} .\]  
Note that $| u - s_{u,t} | < 1/t$, by \eqref{eq:33},
so $| u - s_{u,t} |$ approaches $0$ as $t$ approaches $\infty$.  
Since $s_{u,t}$ lies in $S$ for
any $t \geq 2$, 
$u$ lies in the closure of $S$.  
This holds for any $u \geq \sum^{\infty}_{k=1} 1/(2k(2k+1))$, 
so $S$ contains a dense subset of $(\sum^{\infty}_{k=1} 1/(2k(2k+1)), \infty)$. 
\end{proof}

\begin{rmk}
The preceding proof and the remark made after statement
\eqref{eq:summary} imply that Theorem~\ref{thm:main} holds true 
when we restrict $x$ in the definition of $S$ 
to be numbers of the form $1/t$ where $t$ is a 
positive integer. 
\end{rmk}

\section{An upper bound for $S_n$}
\label{sec:upper-bound}
Recall that $S_n$ denotes the set of numbers of the form
$ [nx] - \sum^{n}_{k=1} {[kx]}/{k}$
where $x$ varies over all real numbers.
Observe that
\begin{equation*}
[nx] - \sum^{n}_{k=1} \f{[kx]}{k} \leq nx - \sum^{n}_{k=1} 
\f{[kx]}{k} = \sum^{n}_{k=1} \f{kx-[kx]}{k} < \sum^{n}_{k=1} \f{1}{k}.
\end{equation*}
The following theorem sharpens this inequality.
\begin{theorem}
\label{thm:upper-bound}
For fixed $n$ and any value of $x$,
$$[nx] - \sum^{n}_{k=1} \f{[kx]}{k} \leq 
\sum^{n}_{k=2} \f{1}{k}.$$
Equality holds when $x = 1 - \frac1{n}$,
so this bound is sharp
as $x$ varies.
\end{theorem}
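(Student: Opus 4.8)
The plan is to eliminate the greatest-integer functions in favor of fractional parts. Writing $\{t\} = t - [t]$ and substituting $[kx] = kx - \{kx\}$ into each summand, the linear terms $kx/k = x$ telescope against $[nx] = nx - \{nx\}$, and I get the identity
\[
f_n(x) = [nx] - \sum_{k=1}^n \frac{[kx]}{k} = \sum_{k=1}^n \frac{\{kx\}}{k} - \{nx\}.
\]
Since $\sum_{k=2}^n 1/k = \sum_{k=1}^n 1/k - 1$, the asserted inequality $f_n(x) \le \sum_{k=2}^n 1/k$ is, after rearranging, equivalent to
\[
\sum_{k=1}^n \frac{1-\{kx\}}{k} \ge 1 - \{nx\}.
\]
So, setting $\psi_k := 1 - \{kx\} \ge 0$, I reduce the whole theorem to the single inequality $\psi_n \le T_n$, where $T_n := \sum_{k=1}^n \psi_k/k$.

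Next I record the one arithmetic fact about the sequence $\psi_k$ that the argument needs: it is \emph{subadditive}. Indeed $\{(a+b)x\} \ge \{ax\} + \{bx\} - 1$ for all positive integers $a,b$, which upon taking complements gives $\psi_{a+b} \le \psi_a + \psi_b$. This is the only point where the structure of $\{kx\}$ enters. Summing the $n-1$ instances $\psi_n \le \psi_k + \psi_{n-k}$ for $k = 1, \dots, n-1$ then yields the averaged bound
\[
(n-1)\psi_n \le 2\sum_{k=1}^{n-1} \psi_k .
\]

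The crux is to prove $\psi_n \le T_n$ for an arbitrary nonnegative subadditive sequence, and I would do this by strong induction on $n$. The engine is the purely algebraic identity
\[
nT_n = \sum_{k=1}^n \psi_k + \sum_{m=1}^{n-1} T_m ,
\]
which follows by expanding $\sum_{m=1}^{n-1} T_m = \sum_{k=1}^{n-1} (n-k)\psi_k/k$ and subtracting it from $nT_n = \sum_{k=1}^n n\psi_k/k$. Granting the induction hypothesis $T_m \ge \psi_m$ for all $m < n$ together with the averaged subadditivity bound above, I obtain
\[
nT_n \ge \sum_{k=1}^n \psi_k + \sum_{m=1}^{n-1}\psi_m = 2\sum_{k=1}^{n-1}\psi_k + \psi_n \ge (n-1)\psi_n + \psi_n = n\psi_n ,
\]
so $T_n \ge \psi_n$; the base case $n=1$ is just $T_1 = \psi_1$. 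This is exactly the reformulated inequality, so the bound $f_n(x) \le \sum_{k=2}^n 1/k$ follows.

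For sharpness I would substitute $x = 1 - 1/n$: then $\{kx\} = 1 - k/n$ for $1 \le k \le n-1$ and $\{nx\} = 0$, so $\psi_k = k/n$ for every $k$, i.e.\ $\psi$ is exactly linear. Every subadditivity relation then holds with equality, hence so does $\psi_n \le T_n$, and unwinding the reformulation gives $f_n(1 - 1/n) = \sum_{k=2}^n 1/k$. The hard part is the crux inequality $\psi_n \le T_n$: bounding $\{kx\} < 1$ term by term only recovers the weaker estimate $f_n(x) < \sum_{k=1}^n 1/k$ already noted before the theorem, so one must genuinely use the correlations among the $\{kx\}$. Subadditivity captures just enough of this, but a single application such as $\psi_n \le \psi_{n-1} + \psi_1$ is far too lossy; the essential trick is to feed the \emph{averaged} subadditivity relation into the induction through the identity for $nT_n$. (The weights $1/k$ are not accidental here: they are the expected number of $k$-cycles in a uniformly random permutation of $n$ letters, which is precisely what makes the averaged subadditivity relations close up with equality in the linear case.)
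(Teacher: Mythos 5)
Your proposal is correct, and it takes a genuinely different route from the paper's. The paper proves the bound by induction on $n$ using the machinery already set up in Sections~\ref{sec:lower-bound}--\ref{sec:prelim}: it takes $x_n = \max_k [kx]/k$ and $d = d_{n,x}$, handles $d < n$ by splitting the sum at $d$ and invoking the induction hypothesis on the first block, and handles $d = n$ by combining the equidistribution statement \eqref{eq:9} (the fractional parts $kx_n - [kx_n]$ are exactly $0/n, 1/n, \dots, (n-1)/n$) with the rearrangement inequality of Lemma~\ref{lem:3}. You instead reformulate the claim as $\psi_n \le \sum_{k=1}^n \psi_k/k$ for the nonnegative subadditive sequence $\psi_k = 1 - \{kx\}$ (where $\{t\}$ denotes the fractional part), and prove that inequality for an \emph{arbitrary} such sequence by strong induction, via the identity $nT_n = \sum_{k=1}^n \psi_k + \sum_{m=1}^{n-1} T_m$ and the averaged subadditivity bound $(n-1)\psi_n \le 2\sum_{k=1}^{n-1}\psi_k$; I have checked each of these steps, including the equality case at $x = 1 - 1/n$, and they are all sound. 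What the paper's route buys is reuse of its existing tools and a transparent path to the equality criterion stated after the theorem (the $d<n$ case is strict, so equality forces $d=n$ and the extremal rearrangement, i.e.\ $x-[x]\ge 1-1/n$). What your route buys is self-containedness and generality: the crux is a clean lemma about subadditive sequences, and --- a bonus worth stating explicitly --- applying the very same lemma to the subadditive nonnegative sequence $\{kx\}$ itself yields $\{nx\} \le \sum_{k=1}^n \{kx\}/k$, which is exactly the Olympiad inequality \eqref{eq:olympiad}; so your one lemma delivers both the lower bound of Section~\ref{sec:lower-bound} and the upper bound of this theorem simultaneously.
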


\begin{proof}  
We proceed by induction on $n$.  
If $n = 1$, then
$[nx] - \sum^{n}_{k=1} [kx]/k = [x] - [x] = 0$ 
and $\sum^{n}_{k=2} 1/k = 0$.
Therefore the theorem is true when $n = 1$.  

Suppose now that $n > 1$ and define $x_n$ and $d = d_{n,x}$ as in the
beginning of Section~\ref{sec:prelim}.  
Note that
\begin{equation}
\label{eq:34}
[nx_n] = [dx_n + (n-d)x_n] = [dx_n] + [(n-d)x_n], 
\end{equation}
because $dx_n$ is an integer (in fact, $dx_n = [dx]$).

Assume at first that $d < n$.  
From \eqref{eq:x-max} and \eqref{eq:34}, we deduce that
\[
[nx] - \sum^{n}_{k=1} \f{[kx]}{k} = [dx_n] - 
\sum^{d}_{k=1} \f{[kx_n]}{k} + [(n-d)x_n] - \sum^{n}_{k=d+1} 
\f{[kx_n]}{k}.
\]
We use the induction hypothesis to get an upper bound
on the first two expressions on the
right and use that $[(n-d)x_n] \leq (n-d)x_n$ to get an upper on the 
last two expressions.  
We obtain that
\[
[nx] - \sum^{n}_{k=1} \f{[kx]}{k} 
\leq \sum^{d}_{k=2} \f{1}{k} + \sum^{n}_{k=d+1} \f{kx_n-[kx_n]}{k}
< \sum^{n}_{k=2} \f{1}{k}.
\]
This proves the desired bound when $d < n$.
(In this case, the bound is strict.)

Assume now that $d = n$.  
From relation~\eqref{eq:x-max} we have
\begin{align}
\label{eq:35}
[nx] - \sum^{n}_{k=1} \f{[kx]}{k} 
= [nx_n] - \sum^{n}_{k=1} \f{[kx_{n}]}{k}
\leq \sum^{n}_{k=1} \f{kx_n-[kx_n]}{k}.
\end{align}
The assumption that ${d=n}$ and statement \eqref{eq:9} imply that
$\{ {kx_n-[kx_n]} : k = 1,2,\dots,n\} = \{0/n, 1/n, \dots, {(n-1)/n} \}$. 
From Lemma~\ref{lem:3} 
(with $b_k = {kx_n-[kx_n]}$ and $c_k = 1/k$) 
we obtain
\begin{equation}
\label{eq:36}
\sum_{k=1}^n \f{kx_n-[kx_n]}{k} 
\leq  \sum_{k=1}^{n} \frac{n-k}{n}\cdot \frac{1}{k}
= \sum_{k=1}^n \left( \frac{1}{k} - \frac{1}{n}\right)
= \sum_{k=2}^n \frac{1}{k}.
\end{equation}
Relations~\eqref{eq:35} and \eqref{eq:36} imply 
the desired bound when $d = n$.

If $x = 1 - {1}/{n}$ it is straightfoward to verify that
$[nx] - \sum^{n}_{k=1} [kx]/k = \sum^{n}_{k=2} 1/k$. 
\end{proof}
It can be shown that the relation in Theorem~\ref{thm:upper-bound} is an equality 
if and only if $x -[x] \geq 1-1/n$.  
We omit the details.

\begin{rmk}
Note that the upper bound in Theorem~\ref{thm:upper-bound} is 
\[ H_n - 1 = \log n + \gamma - 1 + o(1) 
\qquad \text{as } n\to \infty
\]
where $H_n$ is the $n$-th harmonic number
and $\gamma\approx 0.577$ is the Euler--Mascheroni constant.
\end{rmk}

\section*{Acknowledgements}
I am grateful to Charles Nicol for bringing to my attention
the inequality \eqref{eq:olympiad} and encouraging me to prove it.  
I am also grateful to John Selfridge for describing his proof 
of \eqref{eq:olympiad} to me and telling me that \eqref{eq:olympiad}
was in a U.S.A. Mathematical Olympiad.

\end{document}